\newtheorem{thm}{Theorem}
\newtheorem{lem}[thm]{Lemma}
\newtheorem{conj}{Conjecture}
\newtheorem{question}{Question}
\numberwithin{question}{section}
\DeclareMathOperator{\lm}{lm}
\author{Ralf Fr\"oberg}
\author{Samuel Lundqvist}
\address{   Department of Mathematics,
            Stockholm University,
            S-10691, Stockholm, Sweden}
\email{ralff@math.su.se, samuel@math.su.se}
\date{}
\title{Extremal Hilbert series}
\begin{document}

\begin{abstract}
Given an ideal of forms in an algebra (polynomial ring,
tensor algebra, exterior algebra, Lie algebra, bigraded polynomial ring), we consider the 
Hilbert series of the factor ring. 
We concentrate on the minimal Hilbert series, which is achieved when the forms are generic.
In the polynomial ring we also consider the opposite case of maximal series. 
This is mainly a survey article, but we give a lot of problems and
conjectures. The only novel results concern the maximal series in the polynomial ring.
\end{abstract}

\maketitle

\section{Introduction}
The Hilbert series of a graded commutative algebra is an important invariant in commutative algebra
and algebraic geometry.  It has long been known what the Hilbert series of rings
$k[x_1,\ldots,x_n]/I$, $I$ homogeneous, $k$ a field, can be, \cite{ma}. A much harder question is:
If $I$ is generated by forms $f_1,\ldots,f_r$ of degrees $d_1,\ldots,d_r$, what can the Hilbert series be? Not
much is known about this. The question about the minimal Hilbert series is relevant also for other kinds of graded algebras, and in these cases, even less is known.

But what is known is that given a degree sequence $d_1,\ldots,d_r$, we can construct an algebra with a minimal series in the lexicographical sense, by choosing the forms $f_i$ to be \emph{generic}.
A form $f$ in a graded $k$-algebra $R$ is generic if the coefficients are algebraically independent over the prime field of $k$. By \emph{generic forms} we 
mean that the coefficients of all forms are algebraically independent. That choosing the $f_i$'s as generic forms really gives a minimal series follows from the fact that  a non-trivial relation of generic forms specializes to a relation of specific forms. In the commutative case, this is Lemma 1 in \cite{fr}.


We call an algebra which is the quotient of an ideal generated by generic forms \emph{generic}. 
We denote the Hilbert series $\sum_{i\ge0}\dim_kR_iz^i$ of a graded $k$-algebra $R$ by
$R(z)$, and we will assume that $k = \mathbb{C}$ unless otherwise stated. 

\section{Polynomial rings} \label{sec:comm}
Let $R=k[x_1,\ldots,x_n]/(f_1,\ldots,f_r)$, $f_i$ forms of degree $d_i$,
$i=1,\ldots,r$. It is shown by Fr\"oberg-L\"ofwall, \cite{fr-lo}, that there is only a finite number of Hilbert series, and that
there is an open nonempty subspace of the space of coordinates for the $f_i$'s
on which the Hilbert series is constant and minimal. 
The idea is that given $(n,d_1,\ldots,d_r)$, there is a bound for the 
Castelnuovo-Mumford regularity, and for each degree the algebras with nonminimal
dimension constitute a closed set. Since one only has to check a finite set of dimensions,
the algebras with nonminimal dimension in those degrees is closed. Thus there exists a truly
minimal series, not only minimal in the lexicographical sense.

There is a longstanding conjecture due to the first author for this minimal Hilbert series, see \cite{fr}. 

\begin{conj}\label{fr}
$R(z)=(\prod_{i=1}^r(1-z^{d_i})/(1-z)^n)_+$, where $(\sum_{i\ge0}a_iz^i)_+=\sum_{i\ge0}b_iz^i$,
where $b_i=a_i$ if $a_j\ge0$ for all $j\le i$ and $b_i=0$ otherwise,
\end{conj}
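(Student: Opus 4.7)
The plan is to split the conjectured equality into two coefficient-wise inequalities and reduce the hard direction to a maximal-rank statement for multiplication by a generic form.

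For the coefficient-wise upper bound, I would apply the exact sequence
$$S(-d_i) \xrightarrow{\cdot f_i} S \longrightarrow S/(f_i) \longrightarrow 0$$
iteratively, with $S = k[x_1,\ldots,x_n]/(f_1,\ldots,f_{i-1})$. Taking Hilbert series gives $(S/(f_i))(z) \geq S(z)(1-z^{d_i})$ coefficient-wise, and iterating produces the bound $R(z) \leq \prod_{i=1}^r(1-z^{d_i})/(1-z)^n$ in every degree $j$ for which the right-hand side is nonnegative. Since $\dim_k R_j \geq 0$ always, this gives $R(z) \leq \bigl(\prod_i(1-z^{d_i})/(1-z)^n\bigr)_+$.

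For the reverse inequality, equality in the above short exact sequences is equivalent to each multiplication map $\cdot f_i$ having maximal rank (i.e.\ either injective or surjective) in every degree. So I would proceed by induction on $r$: assuming the conjecture for $f_1,\ldots,f_{r-1}$, the task reduces to showing that for a generic form $g$ of degree $d_r$ in the generic algebra $S = k[x_1,\ldots,x_n]/(f_1,\ldots,f_{r-1})$, the map $\cdot g\colon S_j \to S_{j+d_r}$ has maximal rank for every $j$. A natural attempt is to specialize $g$ to a tractable form — for instance a power $\ell^{d_r}$ of a linear form, a monomial, or an element obtained by degeneration to an initial ideal — and invoke semicontinuity of rank to transfer the maximal-rank conclusion back to generic $g$; one might also translate the question through Macaulay's inverse system into a statement about the ranks of catalecticant-type matrices, where representation-theoretic tools ($\mathfrak{sl}_2$-actions, Lefschetz theory) become available.

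The main obstacle is precisely this maximal-rank statement, which amounts to a Strong-Lefschetz-type property for generic algebras. For $n \leq 3$, results of Anick and related arguments push the induction through, and several partial ranges (e.g.\ $r \leq n+1$, or $d_1 = \cdots = d_r = 2$) are known, but in full generality no specialization of $g$ is known to reduce the rank computation to a manageable combinatorial model — the natural candidates (lex segments, Clements--Lindstr\"om ideals, initial ideals under a generic order) fail to realize the conjectured series. I would therefore not expect the inductive step to succeed by elementary means; this maximal-rank property for generic ideals is exactly the stumbling block that has kept the conjecture open for several decades.
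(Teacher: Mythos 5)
You have not proved the statement, and no one has: the statement is Conjecture \ref{fr}, Fr\"oberg's conjecture, which the paper presents precisely as an open problem. The paper contains no proof of it, only a list of the known special cases ($r\le n$ and $n\le 2$ \cite{fr}, $n=3$ \cite{an3}, $r=n+1$ via \cite{st}, and scattered further results). Your own final paragraph concedes that the inductive maximal-rank step ``is exactly the stumbling block that has kept the conjecture open,'' so what you have written is an outline of a reduction, not a proof, and it should be assessed as such. The reduction itself is the standard one and matches the paper's discussion: maximal rank of multiplication by a generic form in every degree is the MRP, and the equivalence of the MRP formulation with Conjecture \ref{fr} rests on the truncation lemma \cite[Lemma 4]{fr} (this is Conjecture \ref{C} in the paper). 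Likewise, the semicontinuity trick you describe --- exhibit one special choice with maximal rank and conclude for generic forms --- is exactly how the known cases are obtained (e.g.\ Stanley's monomial complete intersections with SLP settle $r=n+1$), and as you note, no suitable specialization is known in general.

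Beyond the admitted incompleteness, there is a concrete error in your first paragraph: the two inequality directions are interchanged. The exact sequence gives $\dim_k(S/(f_i))_j \ge \dim_k S_j - \dim_k S_{j-d_i}$, a coefficientwise \emph{lower} bound on the quotient, and iterating it (which is not automatic, since multiplying a coefficientwise inequality by $1-z^{d_i}$, a polynomial with a negative coefficient, is illegitimate --- this is what \cite[Lemma 4]{fr} is for) yields $R(z)\ge\bigl(\prod_{i=1}^r(1-z^{d_i})/(1-z)^n\bigr)_+$. This is the inequality the paper quotes as known (``the true series is larger or equal to the conjectured''), and it holds for \emph{arbitrary} forms, not just generic ones. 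Your claim that $\dim_k R_j\ge 0$ converts this into the upper bound $R(z)\le(\cdots)_+$ is a non sequitur; the upper bound for generic forms is exactly the hard, open direction, which you then re-encounter (correctly) as the maximal-rank statement in your second paragraph. So the easy half of your proposed split proves the wrong inequality, and the hard half is the open conjecture itself.
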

The conjecture is
proved if $r\le n$ (complete intersection case), if $n\le2$, \cite{fr}, if $n=3$ by Anick, 
\cite{an3}, and if
$r=n+1$, it follows from \cite{st}. (In fact Stanley proves something more, see below.) 
It is known that the true series is larger or equal  to the
conjectured, which gives that if one finds an algebra with the conjectured series for some 
$(n,r,d_1\ldots,d_r)$, then the conjecture is proved for these values.
There are also some further special results when all $d_i$ are equal, $d_i=d$ for all $i$.
If $(n,d,r)=(4,d,4)$, by Backelin-Oneto, \cite{ba-on},
$(n,d,r)=(4,d,r)$ for $d=4,6,8,9$ and $(n,d,r)=(5,4,r)$, by Nicklasson, \cite{ni},
$(n,2,3)$, $n\le11$ and $(n,3,r)$, $n\le8$, by Fr\"oberg-Hollman, \cite{fr-ho}. It is also 
proved by Hochster-Laksov that the formula
is correct in the first nontrivial degree $\min\{ d_i\}+1$, \cite{ho-la}. The last result has been
generalized  in two different ways to an interval $t\le d+d'$, $d'<d$ 
(where there are no Koszul relations) if all generators are of degree $d$. The first, by Aubry, is only depending on $n$. The conjectured series is correct in degrees $t\le d+d'$ if 
$$d\ge2d'(n-1)/\sqrt[n-1]{(n-1)!}-d'+(d')^2/\sqrt[n-2]{(n-2)!}+(n-1)^2/\sqrt[n-1]{(n-1)!}-n+5,$$ \cite{au}.
There is also a generalization only depending on $r$ by Migliore and Miro-Roig. The conjectured series is correct in degrees
$t\le d+d'$ if ${d+d'+2\choose d+d'}\ge r{d'+2\choose d'}$, \cite{mi-mi}. We also mention that
the conjecture is proved in many cases by Nenashev, \cite{ne}.

Here are some open questions. A positive answer to Question \ref{q:com4} would give a positive answer to Question \ref{q:com3},
which would give a positive answer to Question \ref{q:com2}. The rings in Question \ref{q:com4} are studied in \cite{c-c-g-o}.

\begin{question} \label{q:com1}
 Let $f_1,\ldots,f_r$ be generic forms, $r \geq n+ 1$.
Stanley showed that the Hilbert series of $k[x_1,\ldots,x_n]/(f_1,\ldots,f_{n+1})$ and 
$k[x_1,\ldots,x_n]/(x_1^{d_1},\ldots,x_n^{d_n},f_{n+1})$ are equal, where $\deg(f_i)=d_i$.  Does the same hold for 
$k[x_1,\ldots,x_n]/(f_1,\ldots,f_{r})$ and 
$k[x_1,\ldots,x_n]/(x_1^{d_1},\ldots,x_n^{d_n},f_{n+1},\ldots,f_{r})$ when $r \geq n + 1$? If all $d_i=d$?
\end{question}

\begin{question} \label{q:com2}
Let $g_1,\ldots,g_r$ be generic forms of degree $d>1$, in $k[x_1,\ldots,x_n]$. Is the
Hilbert series of  $k[x_1,\ldots,x_n]/(g_1^k,\ldots,g_r^k)$ the same as the Hilberts series
of $k[x_1,\ldots,x_n]/(f_1,\ldots,f_r)$, $f_i$ generic of degree $dk$? This is conjectured
in \cite{ni}, and proved in some cases. The corresponding question for $d=1$ is not true,
see Section \ref{sec:lefschetz} and particularly Question \ref{q:linearpowers}. 
\end{question}

\begin{question} \label{q:com3}
 Let ${\bf d}=(d_1,\ldots,d_k)$ be natural numbers with $d_1+\cdots+d_k=d$.
Let $g_i$ be generic of degree $d_i$ and let $h_i=\prod_{i=1}^kg_i$. Does
$k[x_1,\ldots,x_n]/(h_1,\ldots,h_r)$ have the same Hilbert series as
$k[x_1,\ldots,x_n]/(f_1,\ldots,f_r)$, $f_i$ generic of degree $d$ if ${\bf d}\ne(1,1,\ldots,1)$?
\end{question}

\begin{question} \label{q:com4}
Let ${\bf d}=(d_1,\ldots,d_k)$ be natural numbers with $d_1+\cdots+d_k=d$, and let
$g_i=l_{i1}^{d_1}\cdots l_{ik}^{d_k}$, where $l_{ij}$ are generic linear forms.
Does $k[x_1,\ldots,x_n]/(g_1,\ldots,g_k)$ have the same Hilbert series as
$k[x_1,\ldots,x_n]/(f_1,\ldots,f_r)$, $f_i$ generic of degree $d$ if ${\bf d}\ne(1,1,\ldots,1)$?
\end{question}

\begin{question} \label{q:com5}
When $r \geq n$, the expression $(\prod_{i=1}^r(1-z^{d_i})/(1-z)^n)_+$ is a polynomial in $z$. What are the coefficients? What is the degree?
\end{question}

\begin{question} \label{q:com6}
Let $k[y_1,\ldots,y_n]$ be the ring of differential operators acting on $k[x_1,\ldots,x_n]$. Let $f$ be a generic form in $k[x_1,\ldots,x_n]$. Does there exist forms $g_1,\ldots,g_r$ in $k[y_1,\ldots,y_n]$ such that $k[x_1,\ldots,x_n]/(g_1.f,\ldots,g_r.f)$ satisfies Conjecture \ref{fr}?
\end{question}

\begin{question}
Let $f_1,\ldots,f_r$ be generic forms of degree $d$ with $r>n$. When is the Hilbert series of $k[x_1,\ldots,x_n]/I^s$ equal to the Hilbert series of  $k[x_1,\ldots,x_n]/J$, $J$ generated by $\binom{s+r-1}{r-1}$ generic forms of degree $ds$? Related questions were studied in \cite{b-f-l}.
\end{question}

\subsection*{Maximal Hilbert series} \label{sec:max}
We now turn to the opposite problem. What is the $\emph{maximal}$ Hilbert series that an algebra with given degree sequence can attend? 

It is well known that the upper bound for the Hilbert series in the uniform degree case is achieved when $I$ is a lex segment ideal and follows directly from the characterization of Hilbert series due to Macaulay \cite{ma}. However, we are not aware of any results in the mixed degree case. We can give an answer in the case of two variables.

\begin{thm} \label{thm:upper}
Let $k$ be any field and let $f_1,\ldots,f_r$ be forms in $k[x,y]$ of degrees $d_1 \geq \cdots \geq d_r$ such that $(f_1, \ldots, f_r)$ is a minimally generated ideal. Then
$$k[x,y]/(f_1,\ldots,f_r)(z) \leq \frac{1 + z + \cdots + z^{d_r-1} - (z^{d_{r-1}} + z^{d_{r-2}} +  \cdots + z^{d_1})}{1-z}.$$

\end{thm}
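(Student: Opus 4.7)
The plan is to reduce to the $(x,y)$-primary case by factoring out $g := \gcd(f_1, \ldots, f_r)$ (meaningful since $k[x,y]$ is a UFD) and then to analyze the minimal free resolution via the Hilbert--Burch theorem. The case $r = 1$ is immediate: $k[x,y]/(f_1)(z) = (1 - z^{d_1})/(1-z)^2 = (1 + z + \cdots + z^{d_1-1})/(1-z)$. For $r \geq 2$, write $f_i = g \cdot g_i$ with $\gcd(g_1, \ldots, g_r) = 1$; then $I := (f_1, \ldots, f_r) = g \cdot J$ where $J := (g_1, \ldots, g_r)$. Coprimality of the $g_i$ forces $J$ to be $(x,y)$-primary (otherwise the $g_i$ would share a common root in $\mathbb{P}^1_{\bar{k}}$, yielding a common linear factor), and minimality of $\{f_i\}$ forces $e := \deg g < d_r$ together with minimality of $\{g_i\}$. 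A direct calculation gives
\[
k[x,y]/I(z) = \frac{1 - z^e}{(1-z)^2} + z^e \cdot k[x,y]/J(z),
\]
and the stated bound admits an analogous decomposition, so the problem reduces to bounding $k[x,y]/J(z)$ with shifted degrees $d_i' := d_i - e \geq 1$.

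Since $J$ has codimension $2$ in the regular ring $R := k[x,y]$, its minimal free resolution reads
\[
0 \longrightarrow \bigoplus_{j=1}^{r-1} R(-s_j) \xrightarrow{\phi} \bigoplus_{i=1}^{r} R(-d_i') \longrightarrow J \longrightarrow 0,
\]
and Hilbert--Burch identifies each generator $g_i$ (up to a common non-zerodivisor factor $a$) with the $(r-1) \times (r-1)$ minor $\Delta_i$ of $\phi$ obtained by deleting row $i$. The hypothesis $\gcd(g_i) = 1$ forces $a$ to be a unit, so $\Delta_r = \pm g_r \neq 0$: the submatrix of $\phi$ on rows $1, \ldots, r-1$ has nonzero determinant in the domain $R$. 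The Leibniz expansion of this determinant then yields a permutation $\sigma \in S_{r-1}$ such that $\phi_{i, \sigma(i)} \neq 0$ for every $i \leq r-1$, and since the resolution is minimal, each such entry lies in $(x,y)$, so $\deg \phi_{i, \sigma(i)} = s_{\sigma(i)} - d_i' \geq 1$.

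From the resolution, $k[x,y]/J(z) = (1 - \sum_{i=1}^r z^{d_i'} + \sum_{j=1}^{r-1} z^{s_j})/(1-z)^2$, while the claimed bound rewrites as $(1 - \sum_{i=1}^{r} z^{d_i'} + \sum_{i=1}^{r-1} z^{d_i'+1})/(1-z)^2$. Their difference is
\[
\sum_{i=1}^{r-1} \frac{z^{d_i' + 1} - z^{s_{\sigma(i)}}}{(1-z)^2} = \sum_{i=1}^{r-1} z^{d_i' + 1} \cdot \frac{1 + z + \cdots + z^{s_{\sigma(i)} - d_i' - 2}}{1 - z},
\]
and each summand has non-negative coefficients because $s_{\sigma(i)} \geq d_i' + 1$. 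The main obstacle is the middle step: producing a matching between the $r-1$ syzygies and the generators $f_1, \ldots, f_{r-1}$ with strictly positive degree gap, leaving the smallest-degree generator $f_r$ (equivalently $g_r$) unmatched. Hilbert--Burch combined with the Leibniz expansion of the nonvanishing minor $\Delta_r$ is precisely what makes this matching available.
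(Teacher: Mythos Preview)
Your proof is correct but takes a genuinely different route from the paper's. The paper reduces to a minimally generated \emph{monomial} ideal $(m_1,\ldots,m_r)$ via leading terms, then argues combinatorially: for each $i$ it partitions the remaining generators into those with smaller $x$-degree and those with smaller $y$-degree than $m_i$, producing $r$ pairwise disjoint infinite families of monomials in the ideal (one two-dimensional family from $m_i$ and $r-1$ one-dimensional ``rays''), which yields the lower bound $h_i$ on the ideal's series; maximizing over $i$ gives $h_r$ and hence the stated upper bound on the quotient. Your argument is homological: you strip off the gcd to reduce to an $(x,y)$-primary ideal, invoke Hilbert--Burch to get a minimal resolution $0\to\bigoplus R(-s_j)\xrightarrow{\phi}\bigoplus R(-d_i')\to J\to 0$, and then use the nonvanishing of the maximal minor $\Delta_r=\pm g_r$ together with the Leibniz expansion to extract a bijection $\sigma$ between the $r-1$ syzygies and the generators $g_1,\ldots,g_{r-1}$ with $s_{\sigma(i)}\ge d_i'+1$; this matching is exactly what forces the numerator comparison. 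The paper's approach is more elementary and self-contained; yours is more structural, makes transparent \emph{why} the smallest-degree generator $f_r$ plays the distinguished role (it is the one left unmatched by the syzygies), and shows that equality is governed precisely by whether all syzygies are linear after removing the gcd.
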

\begin{proof}
Without loss of generality, we can assume that $(\lm(f_1),\ldots,\lm(f_r)) = (m_1,\ldots,m_r)$ is a minimally generated monomial ideal with respect to some monomial ordering. It is well known that $k[x,y]/(f_1,\ldots,f_r)(z) \leq k[x,y]/(\lm(f_1),\ldots,\lm(f_r))(z)$, so it is enough to show that $$R(z)  \leq \frac{1 + z + \cdots + z^{d_r-1} - (z^{d_{r-1}} + z^{d_{r-2}} +  \cdots + z^{d_1})}{1-z},$$
where $R  = k[x,y]/(m_1,\ldots,m_r).$

For each $i$, $i = 1, \ldots, r$, we divide the monomials $m_1,\ldots,m_r$ into three disjoint groups. Let the first group consist of $m_i$ only. Let the second group consist of the monomials having $x$-degree less than $m_i$, and let the third group consist of the monomials having $y$-degree less than $m_i$. Since $(m_1,\ldots,m_r)$ is minimally generated,  it follows that the three groups form a partition of $\{m_1,\ldots,m_r\}$ for each $i$.

Fix one $i$. The Hilbert series of $(m_i)$ is equal to $\frac{z^{d_i}}{(1-z)^2}$. Denote by $B$ and $C$ the second and the third group respectively, and write $B  = \{m_{j_1},\ldots,m_{j_s}\}, C = \{m_{k_1},\ldots, m_{k_t}\}$. In the ideal $(m_{j_h})$, there is an infinite set $I_{h} = \{ m_{j_h}, m_{j_h} \cdot x, m_{j_h} \cdot x^2, \ldots \}$  of monomials. Likewise, in the ideal $(m_{k_h})$, there is an infinite set $J_{h} = \{ m_{k_h}, m_{k_h} \cdot y, m_{k_h} \cdot y^2, \ldots \}$ of monomials. Each element in $B$ has an unique $x$-degree, and each element in $C$ has an unique $y$-degree, so the $s+t+1=r$ sets $I_{1}, I_{2}, \ldots, I_{s}, J_{1}, J_{2}, \ldots, J_{t}, \{\text{monomials in } (m_i) \}$ are disjoint. 



Thus, the Hilbert series of $(m_1,\ldots,m_r)$ is coefficientwise greater than or equal to $\max(h_i)$, where
\begin{align*}
h_i = \frac{z^{d_i}}{(1-z)^2} + \frac{ z^{d_{1}} + \cdots + z^{d_{i-1}} + z^{d_{i+1}} + \cdots + z^{d_r} }{1-z}= \\
\frac{ z^{d_1} + \cdots + z^{d_r} - (+ z^{d_{1}+1} + \cdots + z^{d_{i-1}+1} + z^{d_{i+1}+1} + \cdots + z^{d_r+1})}{(1-z)^2}.
 \end{align*}
For $i \geq 2$, we have $$h_i - h_{i-1} = \frac{-z^{d_{i-1}+1} +z^{d_{i}+1}}{(1-z)^2}.$$
Since $d_{i-1} \geq d_{i}$, the coefficients in the series $h_i - h_{i-1}$ are non-negative. 
This shows that $h_1 \leq \cdots \leq h_r$ coefficientwise, and especially, that the Hilbert series of  $(m_1,\ldots,m_r)$ is greater than or equal to  $h_r$, so 
\begin{align*}
R(z) \leq 
\frac{1}{(1-z)^2} - \left(\frac{z^{d_r}}{(1-z)^2} + \frac{z^{d_{r-1}} + z^{d_{r-2}} + \cdots + z^{d_1}}{1-z}\right)= \\
\frac{1 + z  + \cdots + z^{d_r-1}}{1-z} - \frac{z^{d_{r-1}} + z^{d_{r-2}} + \cdots + z^{d_1}}{1-z}.
\end{align*}

\end{proof}
We now give an explicit construction of an ideal which attains the maximal possible Hilbert series. 
We first remark that in Theorem \ref{thm:upper}, we actually have $d_r \geq r-1$.  Indeed, 
suppose $I = (\lm(f_1), \ldots, \lm(f_r))  \subseteq k[x,y]$ is minimally generated. Let $m = x^a y^b$ be a monomial of least degree in $I.$ By the minimality assumption, each other generator either has $x$-degree less than $a$, or $y$-degree less than $b$. Moreover, two different generators can not have the same $x$- or $y$-degree. So there are at most $a+b+1$ monomials in $I$. It follows that if $I$ is minimally generated by forms of degrees $d_1 \geq \cdots \geq d_r$, then $d_r \geq r-1$ (and if $d_r= r-1$, then $k[x,y]/I$ is artinian).
\begin{thm}
Let $d_1 \geq \cdots \geq d_r \geq r$ and let $I = (x^{d_1}, x^{d_2-1}y^1,x^{d_3-2}y^2, \ldots, x^{d_r-(r-1)}y^{r-1})$. Then
the Hilbert series of $k[x,y]/I$ is equal to 
$$\frac{1 + z + \cdots + z^{d_r-1} - (z^{d_{r-1}} + z^{d_{r-2}} +  \cdots + z^{d_1})}{1-z}.$$

\end{thm}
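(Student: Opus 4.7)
The plan is a direct computation of the Hilbert series from the staircase of standard monomials of the monomial ideal $I$. I would proceed in three steps.

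First, characterize when $x^a y^b \in I$: this happens iff some generator $g_i = x^{d_i-(i-1)} y^{i-1}$ divides $x^a y^b$, i.e., iff there exists $i \in \{1,\ldots,r\}$ with $a \geq d_i - (i-1)$ and $b \geq i-1$. The key structural fact is that the sequence $d_i - (i-1)$ is \emph{strictly} decreasing in $i$: since $d_i \geq d_{i+1}$, one has $d_i - (i-1) > d_{i+1} - i$. Consequently, for a fixed $b$, among the constraints contributed by the indices $i$ with $i-1 \leq b$, the binding one comes from the largest such $i$, namely $i = \min(b+1, r)$.

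Second, read off the widths of the staircase row by row. The standard monomials with $y$-degree $b$ are precisely $x^a y^b$ with $0 \leq a < c_b$, where $c_b = d_{b+1}-b$ for $0 \leq b \leq r-1$ and $c_b = d_r-(r-1)$ for $b \geq r-1$ (the two formulas agree at $b = r-1$). The hypothesis $d_r \geq r$ guarantees $c_b \geq 1$ for every $b$, so every row is non-empty.

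Third, sum the resulting geometric series:
$$
R(z) = \sum_{b=0}^{r-2} z^b \cdot \frac{1-z^{d_{b+1}-b}}{1-z} \;+\; \sum_{b \geq r-1} z^b \cdot \frac{1-z^{d_r-r+1}}{1-z}.
$$
The $z^b/(1-z)$ contributions from both sums combine and telescope into $\frac{1}{(1-z)^2}$; the tail of the second sum contributes $-\frac{z^{d_r}}{(1-z)^2}$; and the $-z^{d_{b+1}}/(1-z)$ terms of the first sum gather into $-\frac{z^{d_1}+\cdots+z^{d_{r-1}}}{1-z}$. Rewriting $\frac{1-z^{d_r}}{(1-z)^2} = \frac{1+z+\cdots+z^{d_r-1}}{1-z}$ then yields the claimed formula. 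The only real work here is the bookkeeping in this simplification, and that is the main "obstacle." As a cross-check, the same answer drops out of the observation that, by the strict monotonicity of the $x$-exponents, the Taylor-type complex
$$
0 \to \bigoplus_{i=1}^{r-1} R(-(d_i+1)) \to \bigoplus_{i=1}^{r} R(-d_i) \to R \to R/I \to 0,
$$
with $\deg\,\mathrm{lcm}(g_i,g_{i+1}) = \deg(x^{d_i-(i-1)} y^{i}) = d_i + 1$, is a minimal free resolution of $R/I$; reading off the alternating sum of shifted generators and factoring $(1-z)$ out of $\sum_{i=1}^{r-1}(z^{d_i+1}-z^{d_i})$ recovers the same series.
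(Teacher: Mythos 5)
Your proof is correct and is essentially the paper's argument in mirror image: the paper decomposes the ideal $I$ into the same rows of fixed $y$-degree, summing $\frac{z^{d_r}}{(1-z)^2}+\frac{z^{d_{r-1}}+\cdots+z^{d_1}}{1-z}$ for $I$ and subtracting from $\frac{1}{(1-z)^2}$, whereas you count the complementary standard monomials row by row — both computations rest on exactly the structural fact you make explicit, that the $x$-exponents $d_i-(i-1)$ are strictly decreasing (the paper uses it implicitly). Your concluding free-resolution cross-check is an extra, genuinely homological confirmation not present in the paper, but the main argument coincides with the paper's.
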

\begin{proof}

The Hilbert series of the ideal $(x^{d_r-(r-1)} y^{r-1})$ is equal to $\frac{z^{d_r}}{(1-z)^2}$. Next, consider the ideal 
$(x^{d_{r-1} - (r-2)}y^{r-2}, x^{d_r-(r-1)} y^{r-1})$. This ideal consists of the monomials in $(x^{d_r-(r-1)} y^{r-1})$ together with the monomials of the form $x^i y^{r-2}$ with $i \geq d_{r-1} - (r-2)$, so the Hilbert series is equal to $\frac{z^{d_r}}{(1-z)^2} + \frac{z^{d_{r-1}}}{1-z}.$ Repeating this argument, we arrive at the conclusion that the Hilbert series of 
$I$ is equal to  $$\frac{z^{d_r}}{(1-z)^2} + \frac{z^{d_{r-1}} + z^{d_{r-2}} + \cdots + z^{d_1}}{1-z}.$$

It follows that the Hilbert series of $k[x,y]/I$ equals
\begin{align*}
\frac{1}{(1-z)^2} - \left(\frac{z^{d_r}}{(1-z)^2} + \frac{z^{d_{r-1}} + z^{d_{r-2}} + \cdots + z^{d_1}}{1-z}\right)= \\
\frac{1 + z + \cdots + z^{d_r-1}}{1-z} - \frac{z^{d_{r-1}} + z^{d_{r-2}} + \cdots + z^{d_1}}{1-z}.
\end{align*}
\end{proof}

Consider now the three variable case. It is not clear how to choose the monomials in order to maximize the series. In fact, it is not even clear that there is a maximal series in the coefficientwise sense. Let 
$\mathbf{d}= (5,4,3,2)$. Natural choices for maximizing the series would be $I =  (x^5,x^3 y,x^2 z,y^2)$ or $J = (x^2,xy^2,xz^3,y^5)$, where the ideal $I$ is constructed as follows. We start with degree five. We choose $x^5$ since it is the biggest monomial with respect to Lex, and we then proceed in descending degree order, picking  $x^3 y$ since it is the biggest monomial with respect to Lex which is not in $(x^5)$, and so on. 
The monomials in $J$ are chosen in the same manner, but with respect to ascending degree order. 

It turns out that $k[x,y,z]/I(z) < k[x,y,z]/J(z)$ coefficientwise, but in fact, there is another choice that gives a minimal series, namely $K = (x^5,x^3y,xy^2,xz)$. Here we have chosen the generators with respect to reverse Lex and in descending degree order.  In \cite{de}, revlex ideals and minimal Betti numbers were studied, and we believe that there could be a connection to the problem that we consider.   

\begin{question} \label{upperq1}

Is there a largest possible Hilbert series when $n \geq 3$ in the coefficientwise sense? If so, what is it?

\end{question}


\subsection{Resolutions}
There has been some work on the resolution of ideals generated by generic forms 
by Migliore and Miro-Roig, \cite{mi-mi},
\cite{mi-mi2}, and by Pardue-Richert, \cite{pa-ri}, \cite{pa-ri2}. 
Let $R=k[x_1,\ldots,x_n]/I=S/I$, and let $\beta_{i,j}=
\dim_k({\rm Tor}^S_i(R,k))_j$. Then we have that $R(z)=\sum_{i=1}^n(-1)^i\beta_{i,j}z^j/(1-z)^n$. If $I$ is minimally
generated by generic elements $f_1,\ldots,f_r$, $\deg(f_i)=d_i$, then $\beta_{i,j}=0$ if $j-i\ge e$,
where $e$ is the Castelnuovo regularity of $R$, so $R_e\ne0$ but $R_{e+1}=0$ in the artinian case.
It is shown that the resolution agrees with the Koszul resolution of $(f_1,\ldots,f_r)$ in degrees
$j-i\le e-2$ (and also in degree $j-i=e-1$ if the first nonpositive coefficient in 
$\prod_{i=1}^r(1-z^{d_i})/(1-z)^n$ is 0). They also show that if $f_1,\ldots,f_{n+1}$ are generic
forms, $\deg(f_i)=d_i$ which minimally generate $(f_1,\ldots,f_{n+1})$ in $k[x_1,\ldots,x_n]$,
the first nonpositive coefficient is 0 if and only if $\sum_{i=1}^{n+1}d_i$ is odd.
It is natural to conjecture that the only instances when
$\beta_{i_,j}=\beta_{i_2,j}$ should come from Koszul relations (and in fact this was conjectured
in \cite{ia2}). This is shown to be false. A small counterexample from \cite{mi-mi} is the resolution
of four forms of degrees 5,5,5, and 7 in $S=k[x,y,z]$. The minimal resolution has the form
$$0\rightarrow S[-12]^4\oplus S[-11]\rightarrow  S[-10]^7\oplus S[-11]\rightarrow S[-5]^3
\oplus S[-7]\rightarrow S\rightarrow S/I\rightarrow0.$$
Here the "ghost terms" $S[-11]$ are not explained through Koszul relations and cannot be predicted
from the Hilbert series. In case all relations have the same degree, there should be no ghost terms.

If all generators have the same degree, 
there exists a ghost term in the minimal resolution if  we have $\beta_{i_1,j}\ne0$ and $\beta_{i_2,j}\ne0$ with $i_1\ne i_2$ for some $j$. If there are no ghost
terms, then the Betti numbers can be determined from the Hilbert series, and the 
resolution is called pure.

\begin{conj}\cite[Conjecture 5.8]{mi-mi2}\label{mi-mi}
Let $I\subset R = k[x_1,\ldots,x_n]$ be the ideal of $d > n$ generically chosen forms of the same degree. Then there is no redundant term in the minimal free resolution of $R/I$. Consequently, the minimal free resolution is the minimum one consistent with the Hilbert function.
\end{conj}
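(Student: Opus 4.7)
The plan is to combine upper semicontinuity of graded Betti numbers in flat families with the lower bound on Betti numbers imposed by the (conjecturally known) Hilbert function. Write the common degree of the generators as $d$, so that $\beta_{1,d}=r$, and let $e$ denote the Castelnuovo--Mumford regularity of $R/I$. By the results of Migliore--Miro-Roig and Pardue--Richert recalled above, the minimal free resolution of a generic $I$ agrees with the Koszul complex in the strip $j-i\le e-2$, and in many cases also in $j-i=e-1$. Consequently, ghost terms can only appear on the last one or two anti-diagonals of the Betti table, and the problem reduces to controlling $\beta_{i,j}$ for $j-i\in\{e-1,e\}$.

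In that strip, assuming Conjecture \ref{fr}, the alternating sum $\sum_i(-1)^i\beta_{i,j}$ is known for each of the two critical values of $j$ from the Hilbert series. Proving there are no ghost terms amounts to showing that for each such $j$ at most one $\beta_{i,j}$ is nonzero, so that this alternating sum is realized with a single sign. A natural strategy is then to produce, for each triple $(n,r,d)$ with $r>n$, an explicit family of ideals specializing to the generic one whose minimal resolution already has this pure shape: upper semicontinuity of $\beta_{i,j}$ transports the pure resolution to the generic fiber, and the matching lower bound from the Hilbert function forces equality.

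Concretely, I would first look for test ideals among the constructions in Questions \ref{q:com2}, \ref{q:com3} and \ref{q:com4}: powers of generic forms, products of generics of smaller degrees, and products of powers of generic linear forms. A second line of attack is an incremental deformation, starting from the Koszul complex on $n+1$ generic forms (where purity is governed by Stanley's parity criterion for $\sum d_i$ reviewed earlier) and adjoining one generic form at a time, tracking how each new Tor class sits relative to the existing resolution and ruling out cancellation with a Koszul cycle via a direct multiplication-map argument on $R_{e-1-d}\otimes I_d\to I_e$.

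The main obstacle is twofold. First, the argument is conditional on Conjecture \ref{fr}; without knowing the Hilbert function, one does not even know what the target Betti numbers should be. Second, producing a concrete test family with a provably pure resolution is itself as hard as Questions \ref{q:com2}--\ref{q:com4}: naive degenerations, for instance to the generic initial ideal or to lex-segment ideals, typically create additional Betti numbers and therefore new ghost terms, so semicontinuity cannot be applied blindly. Finding a family that is simultaneously rigid enough to compute with and flexible enough to cover all $(n,r,d)$ with $r>n$ is where the real difficulty lies.
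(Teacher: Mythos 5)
Your proposal is not a proof, and it could not be compared favourably or unfavourably against one, because the paper contains no proof of this statement: it is an open conjecture of Migliore--Miro-Roig (\cite[Conjecture 5.8]{mi-mi2}), and the paper only records partial verifications --- the cases $r=n+1$ with $n=3$, $n$ and $d$ both even, or $(n,d)=(4,3)$ from \cite{mi-mi2}, plus the authors' own computer check in Macaulay2 for $d=2$, $n\le5$; $d=3$, $n\le4$; $d=4$, $n=3$. What you have written is a research program whose two load-bearing steps you yourself concede are unresolved: the argument is conditional on Conjecture \ref{fr} (so even the target Betti table is conjectural), and the existence of an explicit test family with provably pure resolution for every $(n,r,d)$ with $r>n$ is left open. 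The semicontinuity-plus-Hilbert-function skeleton is the standard and correct frame --- exhibit one ideal with the generic Hilbert function whose Betti numbers attain the minimum consistent with that function, and upper semicontinuity forces the generic ideal to attain it too --- but filling in that skeleton is precisely the content of the conjecture, not a reduction of it.

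Moreover, the specific test families you propose in your ``first line of attack'' are demonstrably the wrong place to look, by examples recalled in this very paper. Pardue--Richert's example \cite{pa-ri} of $J=(x^4,y^4,z^4,u^4,(x+y+z+u)^4)$ in $k[x,y,z,u]$ has the generic Hilbert series yet carries ghost terms $S[-7]$ and $S[-8]$ in consecutive homological degrees, and the authors' own example $T$, generated by the eight cubes $(x_1+x_2+x_3)^3$, etc., together with $x_i^3$, likewise has the generic series but ghost terms $S[-5]^3$ and $S[-6]^6$. The moral, stated explicitly in the paper, is that the locus of ideals with minimal Betti numbers is \emph{strictly} smaller than the locus with generic Hilbert function; ideals built from powers of (generic or special) linear forms, as in Questions \ref{q:com2}--\ref{q:com4}, can land in the difference. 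So semicontinuity applied to such a specialization yields an upper bound on the generic Betti numbers that is too large to rule out ghost terms, and the approach stalls exactly where you predicted it would. A small notational point in addition: in the statement of Conjecture \ref{mi-mi}, $d$ denotes the \emph{number} of forms (hence $d>n$), whereas you use $d$ for their common degree and $r$ for their number; this does not affect the mathematics of your sketch, but it should be reconciled with the source.
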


\begin{thm}\cite{mi-mi2}
Suppose $R$ is an almost complete intersection, $r=n+1$, generated by generic forms.
Then Conjecture \ref{mi-mi} is true
when $n=3$, when both $n$ and $d$ are even, and when $(n,d)=(4,3)$. \end{thm}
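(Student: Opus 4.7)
The plan is to combine the two ingredients recalled immediately before the theorem: the Hilbert series of $R/I$ for $r=n+1$ generic forms is the one Stanley computed (so Fr\"oberg's conjecture is verified, and the alternating sums of Betti numbers in each internal degree are fixed), and the minimal free resolution of $R/I$ agrees with the Koszul complex $K_\bullet(f_1,\ldots,f_{n+1})$ in bidegrees $(i,j)$ with $j-i \le e-2$, extending to $j-i = e-1$ precisely when $(n+1)d$ is odd. In the Koszul range the only non-vanishing Betti numbers are $\beta_{i,id} = \binom{n+1}{i}$, and since the bidegrees $(i,id)$ lie on distinct anti-diagonals $j - i = i(d-1)$, there can be no ghost terms there automatically. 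So the entire difficulty reduces to understanding the one or two anti-diagonals $j-i \in \{e-1,e\}$ at the top of the resolution.

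For $(n,d)=(4,3)$ one has $(n+1)d = 15$ odd, so Koszul agreement already extends to $j-i = e-1$. The only strand left is $j-i = e$, the socle row. By the Hilbert series the alternating sum $\sum_i (-1)^i \beta_{i,\,i+e}$ is determined, and the regularity bound $\beta_{i,j}=0$ for $j > i+e$ forces all nonvanishing contributions to sit in that single row. A direct count using the explicit value of $e$ for $(4,3)$ then rules out two simultaneously nonzero entries in that row.

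For $n=3$ the projective dimension is exactly $3$, so the minimal resolution has only four columns. Combining the Koszul agreement in $j-i \le e-2$ with Anick's verified Hilbert series and the bound $\beta_{i,j}=0$ for $j > i+e$ leaves only a small grid of bidegrees in which Betti numbers can appear. One checks directly that the alternating-sum constraints read off from the Hilbert series force the remaining ghost-producing entries to vanish.

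The case in which both $n$ and $d$ are even is the most delicate, because $(n+1)d$ is even and so the Koszul agreement only guarantees purity for $j-i \le e-2$; two top anti-diagonals must be controlled. Here the parity of $e$ (dictated by $n$ and $d$ being even) is the key: one shows that a hypothetical ghost term in the row $j-i = e-1$ would contradict the alternating sum read off from the Hilbert series, and similarly for the socle row $j-i = e$. The main obstacle throughout is this endgame analysis at the top of the resolution, where the Hilbert series alone is not enough and one must combine it with the regularity bound and a parity-of-$e$ argument that is genuinely case-specific.
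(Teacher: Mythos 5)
There is a genuine gap, and it is located exactly where you place the ``main obstacle.'' First, note that the paper itself contains no proof of this theorem: it is quoted from \cite{mi-mi2}, so the comparison is with that paper's method. Your endgame steps (``a direct count \ldots rules out two simultaneously nonzero entries,'' ``one checks directly,'' ``a hypothetical ghost term \ldots would contradict the alternating sum'') are not proofs, and worse, they cannot be completed with the tools you allow yourself. A ghost term is by definition a pair $\beta_{i_1,j}\ne 0\ne\beta_{i_2,j}$ with $i_1\ne i_2$; when $i_2=i_1+1$ (the typical case), a ghost pair of equal size $c$ contributes $(-1)^{i_1}c+(-1)^{i_1+1}c=0$ to the alternating sum $\sum_i(-1)^i\beta_{i,j}$ in every internal degree $j$. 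Hence no combination of Hilbert-series constraints, the regularity bound $\beta_{i,j}=0$ for $j>i+e$, and Koszul agreement in rows $j-i\le e-2$ can ever exclude ghost terms in the top rows: they are numerically invisible to all three. The survey you are working from supplies a decisive witness, and it sits precisely in one of your three cases: for $(n,d)=(4,4)$ (both even, $r=n+1=5$), the Pardue--Richert example compares $I$ generated by five generic quartics with $J=(x^4,y^4,z^4,u^4,(x+y+z+u)^4)$. Both quotients have the same (generic) Hilbert series and the same regularity, yet $S/J$ has ghost terms $S[-7]$ in homological degrees $2,3$ and $S[-8]$ in degrees $3,4$ --- all lying in the rows $j-i\in\{e-1,e\}$ your argument claims to control by series bookkeeping --- while $S/I$ has none. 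Since your proposed invariants take identical values on $I$ and $J$, your argument would either prove ghost-freeness for both (false for $J$) or for neither (failing the theorem for $I$).

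What the actual proof in \cite{mi-mi2} does instead is use genuinely non-numerical input: the almost complete intersection $(f_1,\ldots,f_{n+1})$ is linked via the complete intersection $(f_1,\ldots,f_n)$ to an artinian Gorenstein ideal, and the minimal free resolution is extracted from the mapping cone of the dual of the linked resolution. Ghost-freeness is then established case by case from structure theory of Gorenstein resolutions: for $n=3$, the codimension-three Gorenstein structure theory and the classification of possible resolutions; for $n,d$ both even, the self-duality (symmetry) of the Gorenstein resolution combined with a parity argument; and an ad hoc analysis for $(4,3)$. So the parity observation in your sketch does point in the right direction, but it must be applied to the symmetric resolution of the \emph{linked Gorenstein} algebra, where genericity of the forms actually enters --- not to the Hilbert series of $R/I$, which cannot distinguish a minimal resolution from one carrying cancelling ghost pairs.
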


We can add

\begin{thm}
Conjecture \ref{mi-mi} is true for any $r$ when $d=2$, $n\le5$, when $d=3$, $n\le4$, and when $d=4$,
$n=3$.
\end{thm}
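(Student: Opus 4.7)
The plan is to reduce the theorem to a finite check and to verify each resulting case by computing a specialization of $I$, transferring purity to the generic situation by semicontinuity of Betti numbers. The finite reduction uses that any minimally generated ideal of $S = k[x_1, \ldots, x_n]$ by forms of degree $d$ has at most $\binom{n+d-1}{d}$ generators, so combined with $r > n$ only finitely many triples $(n, d, r)$ are relevant in each of the three regimes $(d = 2, n \leq 5)$, $(d = 3, n \leq 4)$, and $(d, n) = (4, 3)$.

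For each such triple, I would choose a random specialization $I = (f_1, \ldots, f_r)$ over a sufficiently large field and compute the minimal free resolution of $S/I$ with a computer algebra system. Two things have to be checked: first, that the Hilbert series of $S/I$ agrees with the one predicted by Conjecture~\ref{fr}, certifying that the chosen specialization lies in the generic locus; second, that the Betti diagram of $S/I$ is pure. The structural results of Pardue--Richert and Migliore--Mir\'o-Roig imply that the resolution agrees with the Koszul complex of $(f_1, \ldots, f_r)$ in the range $j - i \leq e - 2$, and the Koszul Betti numbers concentrate on the diagonal $j = di$; distinct homological degrees therefore occupy distinct total degrees there, so no ghosts can arise. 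Only the narrow tail $j - i \in \{e - 1, e\}$ requires case-by-case inspection.

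To transfer purity from the specialization to the truly generic situation, I would use upper semicontinuity of Betti numbers. If the specialization has $\beta_{i,j}$ pure and matches the alternating sums $c_j = \sum_i (-1)^i \beta_{i,j}$ coming from $(1 - z)^n R(z)$, then the generic diagram $\beta'_{i,j} \leq \beta_{i,j}$ with the same $c_j$ must vanish at all non-distinguished positions and agree with $\beta$ at the distinguished one. So the generic resolution is pure as well.

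The main obstacle is entirely computational: the larger cases---especially $(d, n) = (2, 5)$ and $(d, n) = (3, 4)$ with $r$ near $\binom{n + d - 1}{d}$---give long resolutions, and no obvious uniform structural argument seems to sidestep the case-by-case check. The analytical work is confined to the short tail; once the Hilbert series is in hand, the verification reduces to inspecting at most a couple of diagonals in each case.
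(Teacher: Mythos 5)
Your proposal takes essentially the same approach as the paper: the paper's entire proof is the remark that the statement ``is proved with computer calculations in Macaulay2,'' i.e.\ a finite case-by-case machine check, which is exactly your plan. The scaffolding you add---bounding $r$ by $\binom{n+d-1}{d}$, certifying that a random specialization is generic by matching the Hilbert series lower bound of Conjecture~\ref{fr}, and transferring purity to the generic fiber by upper semicontinuity of graded Betti numbers together with the fixed alternating sums---is the standard (and correct) justification that the paper leaves implicit.
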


This is proved with computer calculations in Macaulay2, see \cite{M2}.

There is an example in \cite{pa-ri} showing that the space of generic ideals with minimal Betti
numbers is strictly smaller than the space of ideals with generic Hilbert function. Let $I$ be
an ideal generated by five generic forms of degree four in $k[x,y,z,u]$, and let
$J=(x^4,y^4,z^4,u^4,(x+y+z+u)^4)$. Then both $k[x,y,z,u]/I$ and $k[x,y,z,u]/J$ have generic series.
The resolution of $R=k[x,y,z,u]/J=S/J$ is
$$0\rightarrow S[-9]^6\rightarrow S[-8]^9\oplus S[-7]^9\rightarrow S[-6]^{16}\rightarrow S[-4]^5
\rightarrow S\rightarrow S\rightarrow S/I\rightarrow0.$$
The resolution of $S/J$ has ghost terms $S[-7]$ in homological degrees 2 and 3, and $S[-8]$ in
homological degrees 3 and 4, but there are no ghost terms in the resolution of $S/I$.
Here is another example we have come across. Let
$R=k[x_1,x_2,x_3,x_4]/(f_1,\ldots,f_8)$, $f_i$ generic of degree 3, and 
$$T=
\frac{k[x_1,x_2,x_3,x_4]}{(x_1^3,x_2^3,x_3^3,x_4^3,(x_1+x_2+x_3)^3,(x_1+x_2+x_4)^3,
(x_1+x_3+x_4)^3,(x_2+x_3+x_4)^3)}.$$ Then $R$ and $T$ has Hilbert series as in Conjecture
\ref{fr}, $R$ has minimal Betti numbers but $T$ has ghost terms $S[-5]^3$ in homological
degrees 2 and 3, and $S[-6]^6$ in homological degrees 3 and 4. We believe that this is the
first example of a series, see the questions below.

\begin{question}
 Let $f_1,\ldots,f_r$ be generic forms of degree $d$. Does $k[x_1,\ldots,x_n]/(f_1,\ldots,f_r)$
and $k[x_1,\ldots,x_n]/(x_1^d,\ldots,x_n^d,f_{n+1},\ldots,f_r)$ have the same Betti numbers?
\end{question}

\begin{question}
Does $k[x,y,z,u]/(x^d,y^d,z^d,u^d,(x+y+z+u)^d)$ have minimal Betti numbers if $d\ne4$?
\end{question}

\begin{question}
Let $R=k[x_1,x_2,x_3,x_4]/(f_1,\ldots,f_8)$, $f_i$ generic of degree $d\ge3$ and let
$T=k[x_1,x_2,x_3,x_4]/(x_1^d,x_2^d,x_3^d,x_4^d,(x_1+x_2+x_3)^d,(x_1+x_2+x_4)^d,
(x_1+x_3+x_4)^d,(x_2+x_3+x_4)^d)$. Is it true that $R$ and $T$ has Hilbert series as in
Conjecture \ref{fr}, $R$ has minimal Betti numbers, and $T$ has ghost terms
$S[-(2d-1)]^3$ in homological degrees 2 and 3, and $S[-2d]^{d+3}$ in homological degrees
3 and 4? (This is true for $d\le8$.)
\end{question}

\begin{question} 
Let $R=k[x_1,x_2,x_3,x_4]/(f_1,\ldots,f_8)$, $f_i$ generic of degree $d\ge2$ and let
$T=k[x_1,x_2,x_3,x_4]/I$, where $I$ is generated by the 8 forms 
$(x_1\pm x_2\pm x_3\pm x_4)^d$. Is $T(z)-R(z)={d\choose2}z^{2(d-1)}$?
\end{question}

\subsection{Connections to the Lefschetz properties} \label{sec:lefschetz}

A graded algebra $A$ has the maximal rank property (MRP) if for any $d$, the map
$A_i\stackrel{f\cdot}{\rightarrow}A_{i+d}$ has maximal rank, i.e. is injective or surjective for all $i$,
if $f$ is a generic form of degree $d$.

The following lemma is easily proved.

\begin{lem}\cite[Lemma 4]{fr}
$((1-z^{d_{r+1}})(\prod_{i=1}^r(1-z^{d_i})/(1-z)^n)_+)_+=(\prod_{i=1}^{r+1}(1-z^{d_i})/(1-z)^n)_+$
\end{lem}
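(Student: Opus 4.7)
The plan is to unfold the definition of $(-)_+$ as ``truncation just before the first negative coefficient'' and exhibit a common truncation on both sides of the identity. Set $A(z) = \prod_{i=1}^r (1-z^{d_i})/(1-z)^n = \sum_{i\ge 0} a_i z^i$, write $d = d_{r+1}$ for brevity, and put $B(z) = (1-z^d) A(z) = \sum_{i\ge 0} b_i z^i$, so $b_i = a_i - a_{i-d}$ with the convention $a_j = 0$ for $j < 0$. Let $k$ be the least index with $a_k < 0$; if no such $k$ exists the identity is immediate, since then $A_+ = A$ and both sides equal $B_+$. By definition $A_+ = \sum_{i<k} a_i z^i$, and $B_+$ is the truncation of $B$ at the first index $m$ with $b_m < 0$.

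First I would compute $(1-z^d) A_+$ explicitly and compare it to $B$ coefficient by coefficient. The key bookkeeping observation is that for every $i < k$, the coefficient of $z^i$ in $(1-z^d) A_+$ equals $b_i$: the positive part contributes $a_i$, and if $i \ge d$ the negative part contributes $-a_{i-d}$, where $a_{i-d}$ is the same in $A$ and in $A_+$ because $i - d < k$. Hence $(1-z^d) A_+$ and $B$ agree in every degree $i < k$.

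The heart of the argument is then to show that the first negative coefficient of $(1-z^d) A_+$ appears in degree $\le k$, so that the agreement above already captures everything relevant. Concretely, I would show $m \le k$ by evaluating $b_k = a_k - a_{k-d}$: if $k < d$, then $b_k = a_k < 0$; if $k \ge d$, then $a_{k-d} \ge 0$ by minimality of $k$, while $a_k < 0$, so again $b_k < 0$. Hence $m \le k$, and since the two series agree on $\{0,1,\ldots,k-1\} \supseteq \{0,1,\ldots,m-1\}$ and share the value $b_m < 0$ in degree $m \le k$, the first negative coefficient of $(1-z^d) A_+$ is also $b_m$ at position $m$. Truncating both series just before $m$ gives $((1-z^d) A_+)_+ = \sum_{i<m} b_i z^i = B_+$, as required.

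The only non-mechanical step is the inequality $m \le k$; everything else is a routine manipulation of the truncation operator, and I do not anticipate any genuine obstacle.
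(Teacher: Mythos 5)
The paper itself offers no argument here --- it simply cites \cite[Lemma 4]{fr} and calls the lemma ``easily proved'' --- so your proposal can only be measured against the statement itself. Your setup is the right one, and two of your three steps are correct: the agreement of $(1-z^d)A_+$ and $B$ in all degrees $i<k$, and the inequality $m\le k$ (via $b_k=a_k-a_{k-d}<0$) are both fine. But the final step, where you claim the two series ``share the value $b_m<0$ in degree $m$,'' so that the first negative coefficient of $C=(1-z^d)A_+$ sits at position $m$, is false in the boundary case $m=k$. There the coefficient of $z^k$ in $C$ is $(A_+)_k-(A_+)_{k-d}=-a_{k-d}$, not $b_k=a_k-a_{k-d}$, and when $a_{k-d}=0$ (in particular whenever $d>k$) this coefficient is $0$, so the truncation point of $C$ lies strictly beyond $m$. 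Concretely, take $n=1$, $(d_1,d_2)=(2,3)$, $d=d_3=5$: then $A=(1+z)(1-z^3)=1+z-z^3-z^4$, so $k=3$, $A_+=1+z$, and $m=3$ since $b_3=-1$; but $C=(1-z^5)(1+z)=1+z-z^5-z^6$ has its first negative coefficient in degree $5$, not $3$. The identity still holds in this example ($C_+=1+z=B_+$), but not for the reason you give.

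The gap is real yet easily repaired, because your argument is missing exactly one observation: for every $i\ge k$ the coefficient of $z^i$ in $C$ equals $-(A_+)_{i-d}\le 0$, since $(A_+)_i=0$ there. Hence if $m<k$ your argument goes through verbatim (the truncation point of $C$ is $m$ and $C_+=\sum_{i<m}b_iz^i=B_+$), while if $m=k$ every coefficient of $C$ in degrees $\ge k$ up to its first strictly negative coefficient must be zero (being both $\le 0$ and not negative), so truncating $C$ returns $\sum_{i<k}b_iz^i$ possibly padded by zeros, which is again $B_+$. In short: what matters is not that the truncation points of $C$ and $B$ coincide --- they need not --- but that the coefficients of $C$ between degree $k$ and its truncation point all vanish. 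With that case distinction added, your proof is complete and is the natural elementary argument for this lemma.
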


This gives the following equivalent formulation of Conjecture \ref{fr}.

\begin{conj}\label{C}
If $f_1,\ldots,f_r$ are generic forms, then $k[x_1,\ldots,x_n]/(f_1,\ldots,f_r)$ has the MRP.
\end{conj}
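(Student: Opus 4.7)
The plan is to prove the MRP by induction on $r$, using semicontinuity of the rank of multiplication maps on the affine space of coefficients together with an explicit specialization. Let $A_r = k[x_1,\ldots,x_n]/(f_1,\ldots,f_r)$ and suppose inductively that $A_{r-1}$ has the MRP. It then suffices, for each pair $(i,d)$, to exhibit a \emph{single} choice of specific forms $(f_1,\ldots,f_r,g)$ with $\deg g = d$ for which the multiplication map $(A_r)_i \xrightarrow{\cdot g} (A_r)_{i+d}$ has maximal rank, since the corresponding matrix minor is a polynomial in the coefficients which is then not identically zero, and so is nonzero on a dense open set of configurations — in particular at a generic one.

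For the specialization I would take $f_i = \ell_i^{d_i}$ and $g = \ell^d$ with $\ell_i,\ell$ generic linear forms. Via Macaulay's inverse system, the component $(A_r)_i$ is dual to the space of degree-$i$ polynomials annihilated by the constant-coefficient differential operators $\ell_i(\partial)^{d_i}$, and the transpose of multiplication by $g$ is the restriction of the operator $\ell(\partial)^d$ to that space. The MRP then translates into an assertion about the generic rank of an evaluation/restriction map between solution spaces of polynomial differential equations, in the spirit of the Alexander–Hirschowitz interpolation theorem. The base of the induction on $d$ is supplied by Hochster–Laksov, which gives maximal rank in degree $\min\{d_i\}+1$, enlarged by the Aubry and Migliore–Miró-Roig intervals cited in the introduction.

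The main obstacle is the inductive step in $d$: one must supply a general mechanism showing that once maximal rank is known in all lower degrees, adding one further operator $\ell(\partial)^d$ cannot collapse ranks below the expected value. No such mechanism is presently known beyond the small cases already catalogued ($n\le 3$, $r\le n+1$, and the various fixed-degree computations of Backelin–Oneto, Nicklasson, and Fröberg–Hollman), which is precisely why Conjecture \ref{C} (equivalent to Conjecture \ref{fr}) has resisted proof for decades. To make this plan succeed one would need a genuinely new structural input — for instance a representation-theoretic control of the symmetric-group action on the apolar kernel, a deformation-theoretic argument tracking the rank along a one-parameter family of generic configurations, or a combinatorial model of the restricted differential operators analogous to Anick's three-variable analysis — and I do not currently see any of these in the required generality.
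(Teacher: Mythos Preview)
The statement you are trying to prove is labeled \emph{Conjecture}~\ref{C} in the paper, and the paper does not prove it. What the surrounding text establishes is only that Conjecture~\ref{C} is an equivalent reformulation of Conjecture~\ref{fr}: the preceding lemma
\[
\Bigl((1-z^{d_{r+1}})\bigl(\textstyle\prod_{i=1}^r(1-z^{d_i})/(1-z)^n\bigr)_+\Bigr)_+=\bigl(\textstyle\prod_{i=1}^{r+1}(1-z^{d_i})/(1-z)^n\bigr)_+
\]
shows that the conjectured Hilbert series for $r+1$ generic forms is exactly what one obtains from the conjectured series for $r$ forms by assuming multiplication by a further generic form of degree $d_{r+1}$ has maximal rank. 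That is the entire content here; there is no attempted proof of the MRP itself.

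Your proposal, to its credit, ultimately acknowledges this: you identify the missing inductive step and note that it is precisely what has blocked progress. But your chosen specialization is not merely incomplete, it is known to fail. Taking $f_i=\ell_i^{d_i}$ with $\ell_i$ generic linear forms does \emph{not} in general produce an algebra with the generic Hilbert series; the paper records this explicitly in Section~\ref{sec:lefschetz} (``One might hope that powers of generic linear forms are sufficiently generic to prove Conjecture~\ref{fr}. This is not true.'') and lists the Iarrobino exceptions $r=n+2$, $r=n+3$, $(n,r)\in\{(3,7),(3,8),(4,9),(5,14)\}$. So even granting a hypothetical rank-preservation mechanism, the semicontinuity argument would be fed a specialization whose multiplication maps are already known to have deficient rank in infinitely many cases. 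Any attack along these lines would first have to replace $\ell_i^{d_i}$ by a specialization that is actually expected to work --- for instance the $x_i^{d_i}$ of Question~\ref{q:com1}, which is itself open.
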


An algebra $A$ has the weak Lefschetz property (WLP) if the multiplication 
$A_i\stackrel{l\cdot}{\rightarrow}A_{i+1}$ has maximal rank for all $i$ and $d$
if $l$ is a generic linear form, and it has
the strong Lefschetz property (SLP) if $A_i\stackrel{l^d\cdot}{\rightarrow}A_{i+d}$ has maximal rank
for all $i$ if $l$ is a generic linear form. If $A$ has SLP, then $A$ has the MRP by semicontinuity, and
if $A$ has the MRP, then $A$ has the WLP (set $d=1$).

The following is proved by Migliore-Miro-Roig-Nagel in \cite{mi-mi-na}
\begin{thm}
If Conjecture \ref{fr} is true for all ideals generated by general forms in $n$ variables, then all 
ideals generated by general forms in $n + 1$ variables have the WLP.
\end{thm}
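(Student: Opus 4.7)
The plan is to pick a generic linear form $\ell$, identify $R/\ell R$ with a quotient in one fewer variable, and then deduce the WLP from a degree-by-degree comparison of Hilbert series.

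First I would use genericity to reduce to the case $\ell = x_{n+1}$. The reductions $\bar f_i := f_i \bmod x_{n+1}$ have coefficient sets that are a subset of the algebraically independent coefficient set of the $f_i$, so they form a system of generic forms of degrees $d_1,\ldots,d_r$ in $S' = k[x_1,\ldots,x_n]$. Applying the hypothesis (Conjecture \ref{fr} in $n$ variables) then pins down
$$H_{R/\ell R}(z) \;=\; A_n \;:=\; \Bigl(\prod_{i=1}^r (1-z^{d_i})/(1-z)^n\Bigr)_+.$$
Writing $A_{n+1}$ for the analogous truncated series in $n+1$ variables, the lemma immediately preceding Conjecture \ref{C}, specialized to $d_{r+1}=1$ and with $n$ replaced by $n+1$, gives the key truncation identity $A_n = \bigl((1-z)\,A_{n+1}\bigr)_+$. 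I would also invoke the standard coefficientwise lower bound $H_R(z) \geq A_{n+1}$.

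The next step is to run the multiplication exact sequence
$$0 \longrightarrow K \longrightarrow R(-1) \xrightarrow{\;\cdot\ell\;} R \longrightarrow R/\ell R \longrightarrow 0,$$
with $K_j$ the kernel of $\cdot\ell\colon R_{j-1}\to R_j$, and read off $H_{R/\ell R}(z)=(1-z)H_R(z)+H_K(z)$. Let $j_0$ denote the smallest integer at which $(1-z)A_{n+1}$ has a negative coefficient; then the truncation identity gives $[z^j]A_n = [z^j]A_{n+1} - [z^{j-1}]A_{n+1}$ for $j<j_0$ and $[z^j]A_n = 0$ for $j\geq j_0$. For $j<j_0$ the Hilbert-series equation rearranges, in terms of $e_j := \dim R_j - [z^j]A_{n+1} \geq 0$, to $e_j = e_{j-1} - \dim K_j$; induction from $e_{-1}=0$ then forces $e_j = \dim K_j = 0$ throughout, so $\cdot\ell\colon R_{j-1}\to R_j$ is injective for every $j<j_0$. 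For $j\geq j_0$ the vanishing $[z^j]A_n=0$ is equivalent to $\cdot\ell\colon R_{j-1}\to R_j$ being surjective. Either conclusion gives maximal rank in that degree, so $R$ has the WLP.

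The delicate spot is really only the first step: one has to know simultaneously that a generic $\ell$ can be taken to be a coordinate and that the reduced $f_i$ remain generic in $S'$. Since genericity is captured by algebraic independence of coefficients, and passage to a subset of an algebraically independent set preserves the property, this is essentially automatic but worth stating explicitly. Everything that follows is a short piece of coefficient bookkeeping driven by the lemma $A_n = ((1-z)A_{n+1})_+$.
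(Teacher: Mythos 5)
Your proof is correct, and it is essentially the standard argument for this theorem, which the paper itself only quotes from \cite{mi-mi-na} without reproving: reduce modulo a (coordinate) linear form so that the $\bar f_i$ remain generic in $n$ variables, apply the truncation identity of \cite[Lemma 4]{fr} with $d_{r+1}=1$ together with the coefficientwise lower bound $H_R \geq A_{n+1}$, and read injectivity below the truncation degree and surjectivity from it onward out of the four-term multiplication sequence. Your handling of the one delicate point --- that the coefficients of the $\bar f_i$ are a subset of an algebraically independent set and hence still generic, and that maximal rank for the specific form $x_{n+1}$ passes to a generic $\ell$ by semicontinuity --- is sound, so there is no gap.
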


It is proved in \cite{st} that monomial complete intersections has the SLP, thus the MRP.
Hence $k[x,y,z]/(x^{d_1},y^{d_2},z^{d_3},f)$, where $f$ is generic of degree $d_4$, has
the Hilbert series from Conjecture \ref{fr}, so the conjecture is proved for $r=n+1$.

One might hope that powers of generic linear forms are sufficiently generic to prove
Conjecture \ref{fr}. This is not true. There is a conjecture due to Iarrobino \cite{ia2}, see \cite{ch}, on
when this is true. 

\begin{conj}
Let $f_1, \ldots, f_r$ be generic forms of degree $d$ and let $l_1, \ldots, l_r$ be generic linear forms.
For all $(d,n,r)$ except $r = n+2, r = n+3, (n,r) = (3,7), (3,8), (4,9), (5,14)$, the algebras 
$k[x_1,\ldots,x_n]/(l_1^{d}, \ldots, l_r^{d})$  and  $k[x_1,\ldots,x_n]/(f_1,\ldots,f_{r})$ have the same Hilbert series.
\end{conj}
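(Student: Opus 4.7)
The natural route is via apolarity. Writing $S = k[x_1,\ldots,x_n]$, the Hilbert function of $A = S/(l_1^d,\ldots,l_r^d)$ in degree $t \geq d$ equals $\dim \ker \phi_t^{\vee}$, where
$$\phi_t^{\vee} \colon S_t \to \bigoplus_{i=1}^r S_{t-d}, \qquad F \mapsto (l_i^d \circ F)_{i=1}^r,$$
and $\circ$ denotes the contraction (partial-derivative) action; so $l_i^d \circ F$ is the $d$-fold iterated directional derivative of $F$ along $l_i$. This is because $\dim (l_1^d,\ldots,l_r^d)_t$ equals the rank of the multiplication map $(g_1,\ldots,g_r) \mapsto \sum g_i l_i^d$, which has the same rank as its transpose $\phi_t^{\vee}$. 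Since the generic-forms algebra already attains the minimal Hilbert series by \cite{fr-lo}, proving the conjecture reduces to showing that $\mathrm{rank}(\phi_t)$ is as large as expected in every degree, whenever $(d,n,r)$ avoids the exceptional list.

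I would then proceed by induction on $t$. Degrees $t < d$ are trivial, since both algebras coincide with $S$ there. For $t = d$ the rank of $\phi_d$ is $\dim\langle l_1^d,\ldots,l_r^d\rangle$, which is controlled by the Alexander--Hirschowitz theorem on the $r$-th secant variety of the Veronese $v_d(\mathbb{P}^{n-1})$; the classical defective loci of Alexander--Hirschowitz account for several of the sporadic entries in the conjectured exceptional list, while the families $r = n+2, n+3$ plausibly originate in residual defects that appear when $r$ is only slightly larger than $n$ and the $r$ points of $\mathbb{P}^{n-1}$ span a configuration with unexpected secant behaviour. For intermediate $t$ I would invoke the Horace differential method: specialise a subset of the points $[l_i] \in \mathbb{P}^{n-1}$ onto a generic hyperplane $H$ and exploit the resulting trace--residue exact sequence to reduce rank computations in $n$ variables at degree $t$ to rank computations in $n-1$ variables at degrees $t$ and $t-1$. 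This is the standard engine for promoting a single-degree Waring statement into a full Hilbert-series statement.

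The central obstacle, and the reason this remains conjectural, is closing the induction uniformly. At each step $t \to t+1$ new linear relations among the multiples $l_i^d \cdot S_{t-d+1}$ can appear which have no counterpart among generic forms of degree $d$; in the exceptional triples these extra relations survive and enlarge the Hilbert series, while in all other cases they must cancel against expected Koszul syzygies. Making this dichotomy precise --- either through a flat-degeneration argument exhibiting a family from $(l_1^d,\ldots,l_r^d)$ to $(f_1,\ldots,f_r)$ outside the exceptional locus and invoking semicontinuity, or through a direct combinatorial identification of when the extraneous syzygies vanish --- is, I expect, the genuinely hard step. All known partial results for this conjecture seem to proceed by delicate case analysis of the Horace specialisation rather than by any such general mechanism, and calibrating the induction so that its failures match exactly the stated exceptional locus is where I would anticipate the argument stalling.
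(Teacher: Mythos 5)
This statement is Iarrobino's conjecture, which the paper records as \emph{open}: no proof is given there, only the remark that some (but not all) of the exceptional cases can be explained through the apolarity correspondence between powers of linear forms and fat point schemes, using the Alexander--Hirschowitz classification \cite{al-hi}, \cite{ch}, \cite{ia2}. So there is no paper proof to match your proposal against, and your proposal --- candidly --- is not a proof either: it is a sketch of the standard attack, and you correctly identify that the induction cannot at present be closed. To that extent your instincts are sound, and your framework is essentially the one the literature (Iarrobino, Chandler) actually uses.

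Two concrete corrections to the sketch itself. First, you attach Alexander--Hirschowitz to the degree $t=d$, via secant varieties of the Veronese; but in degree $d$ generic $d$-th powers $l_1^d,\ldots,l_r^d$ are automatically linearly independent whenever $r\le\dim S_d$ (generic points of a nondegenerate variety impose independent conditions on linear forms), so no defectivity can occur there. The correct dictionary is the Emsalem--Iarrobino duality: for $t\ge d$ one has $\dim A_t=\dim I_Z(t)$ where $Z$ is the scheme of fat points of multiplicity $t-d+1$ supported at the points $[l_i]\in\mathbb{P}^{n-1}$. Thus Alexander--Hirschowitz, which treats only double points, governs exactly the degree $t=d+1$ (the linear syzygies --- this is how Oneto's list of unexpected linear syzygies quoted in the paper arises), while every degree $t\ge d+2$ requires the Hilbert function of fat points of multiplicity $\ge3$ in $\mathbb{P}^{n-1}$, which is wide open in dimension $\ge3$; this, not a calibration issue in the Horace induction, is the structural reason the conjecture is out of reach. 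Second, your suggestion that the infinite families $r=n+2$, $r=n+3$ ``plausibly originate'' in Alexander--Hirschowitz-type secant defects runs against the paper's own remark that precisely these cases are \emph{not} explained by the fat-points/AH mechanism (they stem instead from the special position of few points, e.g.\ $n+2$ points of $\mathbb{P}^{n-1}$ always lying on a rational normal curve), so an induction calibrated only to the AH defect list could not reproduce the stated exceptional locus. With those repairs your text is a fair description of the state of the art, but it should be presented as a programme, not a proof, which is consistent with the paper leaving the statement as a conjecture.
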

Some, but not all, counterexamples can be explained by the relation
between ideals generated by powers of linear forms and ideals of fat points via apolarity
using the famous list of counterexamples by Alexander-Hirschowitz to the expected Hilbert series 
of 2-fat points, \cite{al-hi}. Using \cite{al-hi} Alessandro Oneto (private communication)
has proved that the only cases when $d$'th powers of linear forms have unexpected 
linear syzygies is when
$(d,n,r)=(2,5,7),(3,3,5),(3,5,7),(3,4,9),(3,5,14)$.

\begin{question}
 Does $d$'th powers of generic linear forms have the same Hilbert series as the same 
number of generic forms of degree $d$ if $d>>0$?
\end{question}

\begin{question} \label{q:linearpowers}
Are the exceptions when $d$'th powers of generic linear forms not having the same Hilbert
series as the same number of generic forms of degree $d$ finite for each $n$? For each $d$?
\end{question}

\begin{question}\label{mo1} 
Let $f_i$ be generic forms of degree $d_i$ and let $l_i$ be generic linear forms. Does
$k[x_1,\ldots,x_n]/(l_1^{d_1}, \ldots, l_s^{d_s}, f_{s+1}, \ldots, f_{r+s})$  and  $k[x_1,\ldots,x_n]/(f_1,\ldots,f_{r+s})$ have the same Hilbert series if $s\le n$?  This is conjectured in \cite{mo}. Notice the relationsship with 
 Question \ref{q:com1}.

\end{question}

\begin{question}
 Consider the $2^{n-1}$ linear forms $l_1,\ldots,l_{2^{n-1}}$ in $k[x_1,\ldots,x_n]$ which are
the sum of an odd number of variables. Does $k[x_1,\ldots,x_n]/(l_1^d,\ldots,l_{2^{n-1}}^d)$ have
the same Hilbert series as $k[x_1,\ldots,x_n]/I$, $I$ generated by $2^{n-1}$ generic forms of
degree $d$ if $n\le6$? It is true for $n\le4$, false for $n=7$.
\end{question}

\subsection{Gr\"obner bases}
A monomial ideal $I$ is called almost degrevlex if $m\in I$ implies that $m'\in I$ for all
monomials $m'$ larger than $m$ with $\deg m'=\deg m$. The following conjecture is given
by Moreno-Socias in \cite{mo}.

\begin{conj}\label{mo} The initial monomial in degrevlex order of an ideal generated by 
generic forms is almost degrevlex. 
\end{conj}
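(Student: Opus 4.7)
The plan is to induct on the number of generators $r$. The base $r=1$ is immediate: by genericity, $\lm(f_1)$ is the largest monomial of degree $d_1$ in degrevlex, namely $x_1^{d_1}$, so $(\lm(f_1))$ is vacuously almost degrevlex. For the inductive step, assume $J = \mathrm{in}(f_1, \ldots, f_{r-1})$ is almost degrevlex and let $f_r$ be generic of degree $d_r$. First, reduce $f_r$ modulo a reduced Gr\"obner basis of $(f_1,\ldots,f_{r-1})$ to obtain a normal form $\bar f_r$ whose support lies in the monomial complement of $J$. Since the coefficients of $f_r$ are algebraically independent, those of $\bar f_r$ are nonzero generic affine-linear expressions in them; in particular the coefficient of the degrevlex-maximum of their support is nonzero, so $\lm(\bar f_r)$ equals the largest monomial $m_r$ of degree $d_r$ not in $J$. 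Adjoining $m_r$ to $J$ in degree $d_r$ preserves the almost degrevlex property in that degree, since every degree-$d_r$ monomial strictly above $m_r$ already lies in $J$ by maximality of $m_r$.

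The main work is to propagate this property through all higher degrees. Buchberger's algorithm will produce new leading monomials in degrees $d_r+1, d_r+2, \ldots$ from S-polynomial reductions between $\bar f_r$ and the existing Gr\"obner basis. I would run a nested induction on the degree $e$, assuming $\mathrm{in}(f_1,\ldots,f_r)$ is almost degrevlex through degree $e-1$ and showing that each new leading monomial produced in degree $e$ is the degrevlex-maximum of monomials of degree $e$ not already in the current initial ideal. The crucial lemma is that the reduced normal forms of these S-polynomials retain enough generic behaviour for their leading term to coincide with the degrevlex-maximum of their support. This is the step requiring the most delicate bookkeeping on how genericity propagates through many successive Buchberger reductions, and it is, in my view, the main obstacle --- and precisely why the conjecture has remained open in general.

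A complementary line of attack I would run in parallel uses Galligo's theorem --- that the generic initial ideal in degrevlex is automatically Borel-fixed --- together with induction on $n$ via the well-known degrevlex-specific identity that compatibly passes initial ideals through the quotient by $x_n$, reducing questions about generic forms in $n$ variables to $n-1$ variables. One can also exploit the fact that an almost degrevlex monomial ideal is uniquely determined by its Hilbert series, so any structural route to Conjecture \ref{fr} for the degrevlex Gr\"obner basis of $(f_1,\ldots,f_r)$ would automatically yield Conjecture \ref{mo}. In each of these approaches the fundamental difficulty is the same: encoding a persistence of genericity along the entire Buchberger algorithm, perhaps by treating every intermediate polynomial as a specialization of a universal polynomial with indeterminate coefficients and tracking explicit nonvanishing minors of the coefficient matrix that force the desired leading behaviour at each stage.
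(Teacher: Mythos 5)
This statement is an open conjecture of Moreno-Socias (\cite{mo}); the paper does not prove it and could not, since no proof is known --- the strongest known result, due to Pardue \cite{pa}, is that Conjecture \ref{mo} \emph{implies} Conjecture \ref{fr}, which itself is open. Your proposal is honest about this, but it must be said plainly: what you have written is not a proof, because the ``crucial lemma'' you defer --- that every new leading monomial arising in the Buchberger process is the degrevlex-maximum of the monomials of its degree not yet in the initial ideal --- is not a technical bookkeeping step but is literally the entire content of the conjecture. The heuristic ``generic coefficients propagate to nonzero coefficients'' fails precisely here: the coefficients of normal forms of S-polynomials are polynomial expressions in the original generic coefficients, and some of them vanish \emph{identically} (for instance, S-polynomials coming from Koszul syzygies reduce to zero, and further trivial relations intervene in higher degrees). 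Deciding exactly which leading coefficients survive is equivalent to knowing the Hilbert function of the generic algebra, i.e.\ to Conjecture \ref{fr}; so a completion of your nested induction would settle Fr\"oberg's conjecture in full, which is a measure of how large the gap is. Relatedly, the implication in your last paragraph runs backwards: by \cite{pa} the direction is \ref{mo} $\Rightarrow$ \ref{fr}, and knowing the Hilbert series does not force the initial ideal to be a degrevlex segment in each degree (an almost degrevlex ideal is determined by its Hilbert function, but nothing a priori forces the generic initial ideal to be almost degrevlex); likewise Galligo's theorem gives only Borel-fixedness, which is far weaker than the segment property.

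There is also a concrete structural failure in your induction on $r$: the almost degrevlex property is generally \emph{false} for $r<n$, so your inductive hypothesis breaks in the range the induction must traverse. Already for $r=1$ and $n\ge 3$, a single polynomial is a Gr\"obner basis of the ideal it generates, so $\mathrm{in}((f_1))=(\lm(f_1))=(x_1^{d_1})$; but in degree $d_1+1$ one has $x_1^{d_1}x_3\in(x_1^{d_1})$ while the degrevlex-larger monomial $x_1^{d_1-1}x_2^2\notin(x_1^{d_1})$ (the exponent difference $(-1,2,-1)$ has negative last nonzero entry), so $(x_1^{d_1})$ is not almost degrevlex --- contrary to your claim that the base case is ``vacuous.'' The conjecture in Moreno-Socias's formulation concerns the Artinian situation ($n$ generic forms in $n$ variables; note that the companion Conjecture \ref{pa} in the paper is likewise stated for $I=(f_1,\ldots,f_n)$), and the segment property only has a chance of holding once the ideal contains powers of all the variables; an induction starting from $r=1$ passes through a regime where the statement is simply wrong, so the skeleton of the argument, not just the hard lemma, needs to be replaced.
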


It is proved in \cite{pa} that Conjecture \ref{mo} implies
Conjecture \ref{fr}. It is also shown in \cite{pa} that the following conjecture is equivalent
to Conjecture \ref{fr}.

\begin{conj}\label{pa} If $I=(f_1,\ldots,f_n)$ is generated by generic elements in
$k[x_1,\ldots,x_n]$, then multiplication with $x_{n-i}$ on 
$k[x_1,\ldots,x_n]/(gin(I),x_n,x_{n-1},\ldots,x_{n-i+1})$ has maximal rank for $i = 0, \ldots,n-1$.
\end{conj}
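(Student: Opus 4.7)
The approach is to analyze the monomial ideal $gin(I)$ combinatorially. Since $gin(I)$ is computed in degrevlex order and is Borel-fixed by Galligo's theorem, it has strong structural properties that should reduce the maximal rank statement to a concrete combinatorial claim about monomials. The plan is to turn each multiplication map into a map between explicit sets of standard monomials and then control it via the weakly stable structure of $gin(I)$.

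First I would observe that setting $x_n = x_{n-1} = \cdots = x_{n-i+1} = 0$ yields an isomorphism $B_i := k[x_1,\ldots,x_n]/(gin(I), x_n, \ldots, x_{n-i+1}) \cong k[x_1,\ldots,x_{n-i}]/\overline{J}_i$, where $\overline{J}_i$ is the monomial ideal obtained from $gin(I)$ by deleting all generators involving any of $x_{n-i+1},\ldots,x_n$. The multiplication map $x_{n-i}\cdot : (B_i)_d \to (B_i)_{d+1}$ then becomes the map sending the basis of degree-$d$ monomials outside $\overline{J}_i$ to their products with $x_{n-i}$, and maximal rank becomes a combinatorial statement about how many such products land back in $\overline{J}_i$.

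Next I would exploit the Borel-fixedness: under degrevlex, $gin(I)$ is weakly stable, so if $x_j \cdot m \in gin(I)$ then $x_k \cdot m \in gin(I)$ for all $k < j$. This means that for a monomial $m \in k[x_1,\ldots,x_{n-i}]$ of degree $d$ not in $gin(I)$, the multiple $x_{n-i}\cdot m$ lies in $gin(I)$ precisely when a sharp condition on its exponent vector is satisfied. I would translate the maximal rank claim into a counting identity between the degree-$d$ and degree-$(d+1)$ parts of the complement of $\overline{J}_i$, tracking how the Borel structure forces these counts to be monotone in the required way (so the map is injective while $\dim(B_i)_d \le \dim(B_i)_{d+1}$, and surjective afterwards). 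In the complete intersection case literally written in the statement ($r=n$), Conjecture \ref{fr} is already a theorem, and via the equivalence proved in \cite{pa} this would immediately settle the claim; the substance of any new proof therefore concerns the case $r>n$ implicit in Pardue's work.

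The main obstacle is exactly that Conjecture \ref{pa} is \emph{equivalent} to Conjecture \ref{fr}, so any direct argument must silently reencode the combinatorics of the whole Hilbert series. A natural avenue is via Conjecture \ref{mo}: if $gin(I)$ is almost revlex, then the non-ideal monomials in each degree form a revlex-initial segment, and multiplication by the smallest variable $x_{n-i}$ present in the truncation preserves or contracts such a segment in a controlled way, which should force the maximal rank property in each step. The hardest part is therefore not the final rank bookkeeping, but establishing enough of the Moreno-Socías structure of $gin(I)$ to carry it out; beyond the low-dimensional and uniform-degree cases catalogued earlier in the excerpt this structural input is not yet available, and any genuinely new progress seems to require circumventing the almost-revlex conjecture via a more refined invariant of the generic ideal.
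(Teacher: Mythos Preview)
The paper gives no proof of this statement: it is Conjecture~\ref{pa}, recorded precisely because Pardue showed it is \emph{equivalent} to the open Conjecture~\ref{fr}. So there is no ``paper's own proof'' to compare against; the question is whether your proposal actually proves something.

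It does not, and the central misstep is in your treatment of the equivalence. You write that since the ideal in Conjecture~\ref{pa} is literally a complete intersection ($r=n$), and Conjecture~\ref{fr} is a theorem for complete intersections, ``via the equivalence proved in \cite{pa} this would immediately settle the claim.'' This misreads what the equivalence says. Pardue's result is that Conjecture~\ref{pa} \emph{as stated}, i.e.\ for the generic complete intersection $I=(f_1,\ldots,f_n)$, is equivalent to Conjecture~\ref{fr} \emph{in full generality}, for all $r$. Knowing Conjecture~\ref{fr} only in the case $r\le n$ tells you nothing about Conjecture~\ref{pa}: the maximal-rank conditions on the successive truncations $k[x_1,\ldots,x_n]/(\mathrm{gin}(I),x_n,\ldots,x_{n-i+1})$ encode far more than the Hilbert series of $k[x_1,\ldots,x_n]/I$ itself---they encode, through the equivalence, the generic Hilbert series for \emph{every} number of forms. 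So your ``immediate'' reduction is circular, and there is no implicit $r>n$ case left over to handle separately.

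The remainder of your proposal is a reasonable heuristic outline---Borel-fixedness of $\mathrm{gin}(I)$, the passage to monomial combinatorics, and the observation that the almost-revlex property (Conjecture~\ref{mo}) would make the counting go through---but you correctly concede at the end that this input is itself conjectural. What you have written is a plausible research plan that terminates at the known open problems, not a proof.
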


\subsection{Fewnomials}
Monomial and binomial ideals are far from being general enough, but they are easy to work with. 



\begin{question} \label{qfew1}
For which $r$ are there monomials $m_1,\ldots,m_r$ of degree $d$ making the algebra $k[x_1,\ldots,x_n]/(m_1,\ldots,m_r)$ satisfy Conjecture \ref{fr}?
\end{question}

\begin{question} \label{qfew2}
Given $d_1,\ldots,d_r$, which is the smallest and largest possible Hilbert series that
$k[x_1,\ldots,x_n]/(m_1,\ldots,m_r)$ can have, $m_i$ a monomial of degree $d_i$? 
\end{question}

\begin{question}
What is the answer if we replace monomials by binomials in Question \ref{qfew1} and \ref{qfew2}? A mix between binomials and monomials? Does it, for each $r$, exist a binary ideal such that the Hochster-Laksov result holds?
\end{question}

\section{The tensor algebra}
We will denote the tensor algebra over an $n$-dimensional $k$-vector space with 
$k\langle x_1,\ldots,x_n\rangle$ and a two-sided ideal generated by $f_1,\ldots,f_r$
with $(f_1,\ldots,f_r)$. It is shown by Govorov, \cite{gov1}, that if the $f_i$'s are monomials,
then the Hilbert series $k\langle x_1,\ldots,x_n\rangle/(f_1,\ldots,f_r)$ is a rational function,
and he conjectured, \cite{gov2}, that the same is true if the $f_i$'s are homogeneous. This was shown
to be false by Shearer, \cite{she}. It is shown in \cite{fr-lo} that even if the number of 
generators and the
number of relations and their degrees are fixed, there is in general an infinite number of Hilbert
series, and that there is no open set, even in the usual Euclidean sense with $k=\mathbb R$, 
in the space of coefficients for which the Hilbert series is constant. The smallest counterexample we know of
is for three relations of degree 2 in $k\langle x_1,x_2,x_3,x_4\rangle$.
It is shown in \cite{fr-lo} that 
$\mathbb R\langle x_1,x_2,x_3,x_4\rangle/(x_1x_2-x_1x_3,x_2x_3-x_3x_2-x_2^2/q,x_2x_4)$,
where $q\in\mathbb N_+$, has series $(1-4z+3z^2-z^{q+3})^{-1}$ and that
$\mathbb R\langle x_1,x_2,x_3,x_4\rangle/(x_1x_2-x_1x_3,x_2x_3-x_3x_2,x_2x_4)$
has series  $(1-4z+3z^2)^{-1}$. It follows that the set of algebras with three relations of 
degree two with series $(1-4z+3z^2)^{-1}$ is dense, but not open in $\mathbb R^{48}$, 
and that there is infinitely many series with three relations of degree two in four variables.

\smallskip
The remaining part of this section is taken from two papers by
Anick, \cite{an1} and \cite{an2}. We restrict to the case when $\deg(x_i)=1$ for all $i$. In Anick's
papers also $\deg(x_i)>1$ is treated. 
For two power series 
 $A(z)=\sum_{i=0}^\infty a_iz^i$ and $B(z)=\sum_{i=0}^\infty b_iz^i$ Anick defines 
 $A(z)<_{lex}B(z)$ if for some $n$ we have $a_i=b_i$ if $i<n$ and $a_n<b_n$. 
 It is shown in \cite{an2}
 that for fixed $n,d_1,\ldots,d_r$ the set $S$ of Hilbert series of algebras 
 $k\langle x_1,\ldots,x_n\rangle/(f_1,\ldots,f_r)$, $\deg(f_i)=d_i$ is well ordered. Then
 $\inf(S)$ is called the generic series, and an algebra with generic series is called generic.
 It is not clear that a generic algebra exists, and if it does
 may very well depend on the field $k$. If the field is very large
 (contains the algebraic closure of an infinite transcendtal extension of its prime field) it
 is shown that generic algebras do exist.

If $R=k\langle x_1,\ldots,x_n\rangle/I$ is a graded
quotient and $f\in R_d$ then coefficientwise, $R/(f)(z)\ge R(z)(1+z^dR(z))^{-1}$ . If there is equality
$f$ is called strongly free (or inert), \cite{an1}. 
This gives that if $f_1,\ldots,f_r$ are elements in
$k\langle x_1,\ldots,x_n\rangle$, $f_i\in k\langle x_1,\ldots,x_n\rangle_{d_i}$, then the
inequality
$k\langle x_1,\ldots,x_n\rangle/(f_1,\ldots,f_r)(z)\ge((1-nz+z^{d_1}+\cdots+z^{d_r})^{-1})_+$
holds coefficientwise. and if the Hilbert series equals $(1-nz+z^{d_1}+\cdots+z^{d_r})^{-1}$, 
$\{ f_1,\ldots,f_r\}$ is called a strongly free set. (This is shown to be
equivalent to $k\langle x_1,\ldots,x_n\rangle/(f_1,\ldots,f_r)$ being of global dimension 
$\le2$.) 
If one is able to find an algebra $k\langle x_1,\ldots,x_n\rangle/(f_1,\ldots,f_r)$, $\deg(f_i)=d_i$ with 
Hilbert series $((1-nz+z^{d_1}+\cdots+z^{d_r})^{-1})_+$, this algebra is generic. The concept strongly free is the
counterpart to regular sequences in the commutative case. It is tempting to conjecture that
 for any set of forms $g_1,\ldots,g_r$ with $g_i\in k\langle x_1,\ldots,x_n\rangle_{d_i}$, that
 $k\langle x_1,\ldots,x_n\rangle/(g_1,\ldots,g_r)(z)\ge((1-nz+z^{d_1}+\cdots+z^{d_r})^{-1})_+$,
 and that there is some set $\{ g_1,\ldots,g_r\}$ which gives equality. The inequality is
 proved in \cite{an2}. There it is also shown that if $\deg(g_1)=2$, $\deg(g_2)=d\ge7$,
 there is no algebra $k\langle x_1,x_2\rangle/(g_1,g_2)$ with series $((1-2z+z^2+z^d)^{-1})_+$,
 so the second part of the statement is not true. In case $\deg(f_i)=2$ for all $i$, one can say a
 bit more. If $r\le n^2/4$ there are strongly free sets, so in this case the generic series is
 $(1-nz+rz^2)^{-1}$ \cite[Lemma 5.10]{an2}. If $r>n^2/2$ there exists an algebra with series
 $((1-nz+rz^2)^{-1})_+$.

 We conclude this section with some open 
 questions. They all concern the case when the ideal is generated by $r$ forms of degree 2. 
 Also c.f \cite{an2}.
 
 \begin{question}
 Is the generic series equal to $(1/(1-nz+rz^2))_+$ also if $n^2/4<r<n^2/2$?
 \end{question}
 
 \begin{question}
 For which $r$ is the generic algebra artinian?
 \end{question}
 
 \begin{question}
 Does the space of coefficients for rings with generic series contain an open nonempty set?
 \end{question}
 
 \begin{question}
 Does the Hochster-Laksov result hold in the tensor algebra? For $n = 2$?
 \end{question}
 
 
  \section{Lie algebras}
 We consider $\mathbb N$-graded Lie algebras.
 The Jacobi identity is $[[a,b],c]=[a,[b,c]]+[[a,c],b]$. To get quotients of associative
 algebras we study the universal enveloping algebra of Lie algebras. Thus we study
 rings of type $T(V)/I$, where $I$ is generated by Lie elements. We
 know that $((1-nz+\sum_{i=1}^rz^{d_i})^{-1})_+$, where $n$ is the number of generators, and
 we have $r$ relations of degree $d_1,\ldots,d_r$, is a lower bound for the Hilbert series.
 Anick \cite{an2}  gives an example,
 $k\langle x,y\rangle/(f_1,f_2)$, where $\deg(f_1)=3$, $\deg(f_2)=8$. ($f_1$ can after
 a change of basis be chosen as $[x,[x,y]]=[x^2,y]$.) He shows that the series is not
 the expected $(1-2z+z^3+z^8)^{-1}$. He also shows \cite{an1} that if all relations
 are of degree two, so they are linear combinations of commutators of variables,
 then there are strongly free sets of Lie elements of size $r$ if $r\le n^2/4$, so
 the Hilbert series is $(1-nz+rz^2)^{-1}$ \cite[Lemma 5.10]{an2}.
 
 \medskip
 \noindent
 {\bf Example} If $r\le n-1$, we can get the expected Hilbert series by
 choosing commutators. If $I=([x_1,x_2],[x_1,x_3],\ldots,[x_1,x_n])$, then 
 $k\langle x_1,\ldots,x_n\rangle/I$ has
 the expected series $(1-nz+(n-1)z^2)^{-1}$ as a simple
 calculation shows.
 
 \begin{question}
 Is the generic series for $r$ generic Lie elements of degree 2 in $n$ 
 variables equal to $((1-nz+r z^2)^{-1})_+$?
 \end{question}
 
 
\begin{question}
 How many generic Lie elements of degree $d$ is required for $L$  to be nilpotent?
 \end{question}

 \section{Exterior algebras}
 Let $V$ be a $k$-vector space of dimension $n$ and denote by $\Lambda(V)$ the exterior
 algebra on $n$ generators, that is, $\Lambda(V) = k\langle x_1,\ldots,x_n\rangle/(x_ix_j+x_jx_i,1\le j\le n,x_i^2,i=1,\ldots, n)$.
 In the same way as for commutative algebras, one can show that there is a
 coefficientwise minimal series which is achieved on a non-empty Zariski open
 set of the coefficients, and that we have the corresponding inequality for the
 Hilbert series. Thus, if $I$ is an ideal generated by generic forms $f_i$, $\deg(f_i)=d_i$,
 $i=1,\ldots,r$, then $\Lambda(V)/I(t)$ is coefficientwise smaller or equal to
 $(\prod_{i=1}^r(1-z^{d_i})(1+z)^n)_+$.
 A first guess could be that if $I$ is an ideal generated by generic forms, then
 $\Lambda V/I(z)=k[x_1,\ldots,x_n]/((x_1^2,\ldots,x_n^2)+J)(z)$, where $J$ has "the
 same" generators as $I$. It is true for a principal
 ideal generated by an element of even degree \cite{mo-sn}, so if $f$ is a 
 generic element of even degree
 $d$, then the Hilbert series is $((1-z^d)(1+z)^n)_+$.  But since $f \subseteq { \rm Ann} f$ when $f$ is odd, the guess is not true even for principal ideals.

In \cite{lu-ni}, the series for $\Lambda(V)/(f)$ is determined for $d$ odd and equal to $n-2$ (the expected series + $t^{n-1}$) and $n-3$ (the expected series) and for $(9,3)$ (the expected series $+3t^6$). It is also shown that the series is not the expected one when 
the form has odd degree $d$, $n$ is odd, $3d > n$ and $\binom{n}{(n+d)/2}$ is odd . The last result gives a counterexample to Conjecture 6.1 in \cite{mo-sn}.

When the ideal is generated by two general quadratic forms, it was noticed in \cite{fr-lo2} that 
for $n=5$, the Hilbert series differs from the expected series by one in degree three. The following conjecture, 
providing a combinatorial description of the series, is given in \cite{cr-lu-ne}.

\begin{conj}\label{conj:paths}
Let $f$ and $g$ be generic forms. Then the Hilbert series of $\Lambda(V)/(f,g)$ is equal to 
$1 + a(n,1) t + a(n,2) t^2 + \cdots + a(n,s)t^s + \cdots$, where $a(n,s)$ is the number of lattice paths inside the rectangle $(n+2-2s)\times (n+2)$ from the bottom left corner to the top right corner with moves of two types; $(x,y)\rightarrow (x+1,y+1) \textrm{\ or\ } (x-1,y+1)$. 
\end{conj}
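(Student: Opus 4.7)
The plan is to analyse $\Lambda(V)/(f,g)$ through the Koszul complex on $(f,g)$. Since $f$ and $g$ have even degree $2$, they lie in the centre of $\Lambda(V)$, so $\Lambda(V)$ carries the structure of a graded module over the polynomial ring $k[F,G]$. Tensoring the Koszul resolution of $k$ over $k[F,G]$ with $\Lambda(V)$ gives the complex
\begin{equation*}
0 \longrightarrow \Lambda(V)[-4] \xrightarrow{\binom{g}{-f}} \Lambda(V)[-2]^{\oplus 2} \xrightarrow{(f,\;g)} \Lambda(V) \longrightarrow \Lambda(V)/(f,g) \longrightarrow 0,
\end{equation*}
whose Euler characteristic is $(1-z^{2})^{2}(1+z)^{n}$. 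Writing $H_{1},H_{2}$ for the higher homologies (so $H_{2}=\mathrm{Ann}_{\Lambda(V)}(f)\cap\mathrm{Ann}_{\Lambda(V)}(g)$ in the appropriate sense and $H_{1}$ is the non-Koszul syzygy module), we get
\begin{equation*}
\mathrm{HS}(\Lambda(V)/(f,g);z)\;=\;(1-z^{2})^{2}(1+z)^{n}+\mathrm{HS}(H_{1};z)-\mathrm{HS}(H_{2};z).
\end{equation*}
The naive truncated series $((1-z^{2})^{2}(1+z)^{n})_{+}$ is what one obtains under the assumption that these corrections only clip the negative tail. For two quadrics that assumption already fails at $n=5$ by \cite{fr-lo2}, which is exactly the first case in which the conjectured $a(n,s)$ differs from the truncation.

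The next step is to specialise $(f,g)$ to a generic pencil of skew-symmetric bilinear forms on $V^{*}$ and apply Kronecker normal form. For a generic pencil the Kronecker decomposition involves only a small number of elementary blocks whose sizes are controlled by the parity of $n$, and in this normal form the multiplication maps $\mu_{f},\mu_{g}$ become block-diagonal. One can then compute $H_{1}$ and $H_{2}$ either directly on each block or by induction on a filtration of $\Lambda(V)$ compatible with the Kronecker decomposition. The upshot should be a closed expression for $\mathrm{HS}(\Lambda(V)/(f,g);z)$ as the sum of the Euler characteristic and a combinatorially explicit Kronecker-block correction.

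The final step is to identify this expression with $\sum_{s}a(n,s)z^{s}$. Rather than exhibit an explicit bijection, the cleanest route should be a two-term recursion in $n$ satisfied by both sides: algebraically it comes from passing from $V$ to $V'$ with $\dim V'=\dim V-2$ (peeling off one Kronecker block), and combinatorially it comes from the standard ballot-style recursion on $a(n,s)$ obtained by conditioning on the first step of the path and shrinking either the height or width of the rectangle. The main obstacle I expect is Step two: even for a single generic quadratic form, the annihilator in $\Lambda(V)$ is already subtle in odd dimensions (see \cite{lu-ni}), and for a pencil the interaction between $\mathrm{Ann}(f)$ and $\mu_{g}$ seems to require either an apolarity-style reduction in the spirit of \cite{mo-sn} or, more optimistically, the identification of an explicit $\mathfrak{sl}_{2}$-triple acting on $\Lambda(V)$ whose weight-space dimensions realise the lattice-path count $a(n,s)$ directly.
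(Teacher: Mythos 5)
You should first be aware that the statement you are proving is labelled a \emph{conjecture} in the paper: no proof is given there, and the only result the paper reports is one inequality, namely that the lattice-path series is an \emph{upper bound} for the Hilbert series of $\Lambda(V)/(f,g)$, proved in \cite{cr-lu-ne} via the theory of matrix pencils. Your proposal is therefore aiming at something strictly stronger than anything the paper contains, and as written it is a research plan rather than a proof: the two steps that carry all the mathematical weight (computing $H_1$ and $H_2$ of the Koszul complex over $k[F,G]$ for a generic pencil, and matching the resulting closed expression with $a(n,s)$) are both left as ``the upshot should be'' statements, and you yourself flag the second step as the expected obstacle. Nothing in the proposal resolves it.

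Beyond incompleteness, one asserted step would fail as stated. Kronecker normal form for a generic pencil of skew-symmetric forms decomposes $V$ as a direct sum $V=\bigoplus_i V_i$ with $f=\sum_i f_i$, $g=\sum_i g_i$, $f_i,g_i\in\Lambda^2 V_i$ (for $n$ even, generically $n/2$ two-dimensional blocks with distinct eigenvalue ratios; for $n$ odd, one additional odd block). But $\Lambda(V)\cong\Lambda(V_1)\otimes\cdots\otimes\Lambda(V_m)$ is a \emph{tensor} product, not a direct sum, so $\mu_f$ and $\mu_g$ act as sums $\sum_i 1\otimes\cdots\otimes\mu_{f_i}\otimes\cdots\otimes 1$ of commuting operators, not block-diagonally; the $k[F,G]$-module $\Lambda(V)$ is a convolution-type tensor product of the block modules, and Koszul homology over $k[F,G]$ does not simply localize to blocks --- you would need a K\"unneth spectral sequence over $k[F,G]$ or the filtration argument you mention in passing, and it is exactly the possible non-degeneration of that spectral sequence that makes the problem hard (already for a single generic form the annihilator structure is delicate, cf.\ \cite{lu-ni}, though note your appeal to odd-degree subtleties is misplaced here since $\deg f=\deg g=2$ is even and the principal even case is settled in \cite{mo-sn}). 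Finally, the closing identification is also only sketched: you assert a two-term recursion in $n$ satisfied by both sides but verify it for neither; in particular you never check that $a(n,s)$ satisfies the ballot-style recursion you invoke, nor that the algebraic side, even granting Step 2, produces the same initial conditions. In short, the skeleton (Koszul complex plus Euler characteristic, pencil normal form, recursion in $n$) is a reasonable attack, and the pencil idea is consistent with how the upper bound was obtained in \cite{cr-lu-ne}, but the lower bound --- the genuinely open content of the conjecture --- remains untouched.
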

It is proved in \cite{cr-lu-ne} that the series in Conjecture \ref{conj:paths} is an upper bound for $\Lambda(V)/(f,g)$ using the theory of matrix pencils.


\begin{question}
Let $l_1$ and $l_2$ be generic linear forms in $k[x_1,\ldots,x_n]$. Does the Hilbert series of $k[x_1,\ldots,x_n]/(x_1^2,\ldots,x_n^2,l_1^2,l_2^2)$ equal that for 
$\Lambda(V)/(f,g)$, $f$ and $g$ generic quadratic forms? This is conjectured in \cite{cr-lu-ne}. The corresponding question for three linear forms is not true.
\end{question}

\begin{question}
The Hochster-Laksov result does not hold in the exterior algebra when $n = 5, d_1=d_2=2$, as showed in \cite{fr-lo2}. Is this the only counterexample? 
\end{question}

\begin{question}
Given that even the principal case is unsolved, it seems hard to predict the Hilbert series for generic ideals in $\Lambda(V)$. But the only counterexamples from the naively expected series given by Conjecture \ref{fr} that we are aware of is when $r$ is small compared to $n$. Does it hold that the Hilbert series for $\Lambda(V)/(f_1,\ldots,f_r)$ is equal to the Hilbert series for $k[x_1,\ldots,x_n]/(f_1,\ldots,f_r, x_1^2,\ldots,x_n^2)$ when $r \geq n$ ?
\end{question}

\begin{question}
Consider a supercommutative algebra, generated by even and odd elements.
 In this setting the usual commutative algebra is a superalgebra with even generators, while the exterior algebra is a superalgebra with odd generators. Given a degree sequence $(d_1,\ldots,d_r)$, what is the minimal Hilbert series such an algebra can have? If at most half of the elements are odd?
\end{question}
 

 

 

 \section{Bigraded algebras}
 The coordinate ring of $\mathbb P^m\times\mathbb P^n$ is $R=k[x_0,\ldots,x_m,y_0,\ldots,y_n]$,
 where $\deg x_i=(1,0)$ and $\deg y_i=(0,1)$. We propose the problem to determine the Hilbert
 series of generic (bigraded) ideals in $R$. Suppose the ideal is generated by $f_1,\ldots,f_r$,
 $\deg f_i=(d_i,e_i)$. A natural guess is that the bigraded Hilbert series of $R/(f_1,\ldots,f_r)$ is
 $$(\prod_{i=1}^r(1-x^{d_i}y^{e_i})/((1-x)^2(1-y)^2))_+,$$ where $(\sum_{i,j\ge0}a_{i,j}x^iy^j)_+=
 \sum b_{i,j}x^iy^j$, where $b_{i,j}=a_{i,j}$ if $a_{k,l}>0$ for all $(k,l)\le(i,j)$ and $b_{i,j}=0$
 otherwise. 
 The guess is not true in general; the first counterexample that we have found is 
for three relations of degree $(2,1)$ in the coordinate ring of $\mathbb P^1\times\mathbb P^2$.
 But we conjecture that it is true when $m=n=1.$

 \begin{conj}
  If $f_1,\ldots,f_r$ are generic bigraded forms, $\deg f_i=(d_i,e_i)$, in $R=k[x_0,x_1,y_0,y_1]$,
  then the Hilbert series of $R/(f_1,\ldots,f_r)$ is equal to $(\prod_{i=1}^r(1-x^{d_i}y^{e_i})/((1-x)^2(1-y)^2)))_+$.
 \end{conj}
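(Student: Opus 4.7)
The plan is to mirror, in the bigraded setting of $R = k[x_0,x_1,y_0,y_1]$, the reduction of Conjecture \ref{fr} to a maximal-rank property. The easy inequality
$R/(f_1,\ldots,f_r)(x,y) \geq (\prod_{i=1}^r(1-x^{d_i}y^{e_i})/((1-x)^2(1-y)^2))_+$
should follow from the standard specialization argument of Lemma 1 in \cite{fr}: a bigraded syzygy among generic forms specializes to one among arbitrary bigraded forms of the same bidegrees, so the generic series is coefficientwise smallest. The content of the conjecture is therefore the opposite inequality.

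For the $\leq$ direction I would proceed by induction on $r$. Assuming $A_{r-1} := R/(f_1,\ldots,f_{r-1})$ has the conjectured series, the goal is to show that for a generic bigraded $f_r$ of bidegree $(d_r,e_r)$, multiplication by $f_r$ as a map $(A_{r-1})_{(a,b)} \to (A_{r-1})_{(a+d_r,b+e_r)}$ has maximal rank in every bidegree $(a,b)$. This must be combined with a bigraded analogue of the truncation identity of \cite[Lemma 4]{fr}, asserting that multiplying $(\prod_{i=1}^{r-1}(1-x^{d_i}y^{e_i})/((1-x)^2(1-y)^2))_+$ by $(1-x^{d_r}y^{e_r})$ and re-truncating with $(\cdot)_+$ yields the $r$-factor version. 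The latter should be a direct check, provided one uses the fact that in the definition of $(\cdot)_+$ any bidegree at which the formal coefficient becomes non-positive automatically forces all componentwise-larger bidegrees to be set to zero as well.

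For small $r$ the expected series has only non-negative coefficients, and the conjecture reduces to Koszul-exactness of the expected complex; for these base cases I would argue via a parameter-space dimension count that generic bigraded forms avoid the non-regular locus. For larger $r$ the ideal is not generated by a regular sequence, and the bigraded MRP must be tackled head on. Two natural attempts are (i) to deform $(f_1,\ldots,f_r)$ to an explicit combinatorial family --- bihomogeneous monomials, or products $\ell_i^{d_i} m_i^{e_i}$ of powers of generic linear forms in $x_0,x_1$ and in $y_0,y_1$ respectively, in analogy with the approaches used in \cite{ni} and \cite{ba-on}; or (ii) to regard $A_{r-1}$ as a finitely generated graded module over $k[x_0,x_1]$ and exploit the fact that the two-variable Fr\"oberg conjecture is a theorem to control rank in the $x$-direction, combining this with the symmetric statement in the $y$-direction.

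The principal obstacle is that $R$ has four standard-graded variables, where Fr\"oberg's singly-graded conjecture is itself open, so no forgetful-grading reduction can succeed. The bigrading must therefore be genuinely exploited: generic bigraded forms of a fixed bidegree are much more constrained than arbitrary standard-graded generic forms, so in principle more can be said, but extracting this extra rigidity is where all the work lies. A secondary difficulty is that the truncation by $(\cdot)_+$ is governed by a staircase in the $(a,b)$-plane rather than a single threshold, making the inductive step across such a staircase considerably more delicate than in the singly-graded case. In practice I would first gather Macaulay2 evidence for small $r$ --- $r=5$ with various small bidegrees is a reasonable start --- and look for patterns in how the first syzygy module grows beyond the Koszul range before committing to either attack.
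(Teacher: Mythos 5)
The statement you are attempting is presented in the paper as an \emph{open conjecture}: the authors give no proof at all, only computational verification (uniform degrees $(d,e)\le(3,3)$, and degree sequences with $d_i+e_i=3$), so there is no paper argument to match yours against, and your proposal — as you yourself concede — is also not a proof. The entire content of the conjecture, the inequality $\le$ via maximal rank of multiplication by a generic $f_r$ on $A_{r-1}$ in every bidegree, is left as two untried ``attempts.'' The reduction skeleton (induction on $r$ through a bigraded MRP plus a bigraded analogue of \cite[Lemma 4]{fr}) is plausible, and the monotone-vanishing fact it silently needs — once $(A_r)_{(a,b)}=0$, all componentwise larger bidegrees vanish — does hold because $R$ is generated in bidegrees $(1,0)$ and $(0,1)$; but reducing one open statement to another open statement is not a proof. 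Worse, your framework as stated proves too much: nothing in the induction, nor in your attempt (i) (deformation to bigraded monomials or to products $\ell_i^{d_i}m_i^{e_i}$ of powers of linear forms), uses $m=n=1$, yet the paper explicitly records that the identical formula \emph{fails} for three generic forms of bidegree $(2,1)$ in the coordinate ring of $\mathbb{P}^1\times\mathbb{P}^2$. Hence generic bigraded forms do not have the maximal rank property in general, and any viable argument must isolate what is special about two variables in each group; your attempt (ii), exploiting that Fr\"oberg's conjecture is a theorem in two variables, is the only ingredient pointing in that direction, and it is the one you leave vaguest.

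A smaller but genuine error: the lower bound $\ge$ does not ``follow from the specialization argument of Lemma 1 in \cite{fr}.'' That lemma shows the generic series is coefficientwise minimal among all choices of forms of the given bidegrees; it does not show that the closed formula is a lower bound for every such algebra. For that one needs the degreewise rank estimate $\dim(A_r)_{(a,b)}\ge\dim(A_{r-1})_{(a,b)}-\dim(A_{r-1})_{(a-d_r,b-e_r)}$ together with an induction compatible with the staircase truncation $(\cdot)_+$ over the partial order on $\mathbb{N}^2$ (note also that the paper's bigraded truncation uses strict positivity $a_{k,l}>0$, slightly different from the singly graded definition) — probably routine, but a different argument from the one you cite. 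In sum: no step of the conjectured equality is established; your text is a reasonable research plan, and its closing suggestion to gather Macaulay2 evidence for small bidegrees is in fact exactly what the authors did, but it does not close the gap that makes this statement a conjecture.
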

 
We
 have checked the conjecture on computer in case all relations are of the same degree
 $(d,e)\le(3,3)$ and in case of all degrees are of the form $(d_i,e_i)$ with $d_i+e_i=3$,
 and the corresponding claim in a few examples in the coordinate ring of 
 $\mathbb P^1\times\mathbb P^1\times\mathbb P^1$. 
 We
 conclude with some open questions.
 
 
 \begin{question}
 If $f_1,\ldots,f_r$ are generic multigraded forms in the coordinate ring $R$ of 
 ${\mathbb P}^1\times\cdots\times{\mathbb P}^1=({\mathbb P}^1)^n$, $\deg f_i=(d_{1i},\ldots,d_{ni})$,
 is the Hilbert series of $R/(f_1,\ldots,f_r)$
 $$(\prod_{i=1}^r(1-x_1^{d_{1i}}\cdots x_n^{d_{ni}})/\prod_{i=1}^n(1-x_i)^2)_+?$$
 \end{question}
 
 \begin{question}
 What is the Hilbert series of an ideal generated by generic bigraded forms in the coordinate ring
 of $\mathbb P^m\times\mathbb P^n$?
 \end{question}

 \section{Positive characteristic}

 Conjecture \ref{fr} is true for any field when $n \leq 2$, \cite{fr}.  When $n =3$, it is 
 true when $k$ is infinite \cite{an3}. The Hochster-Laksov result \cite{ho-la}, that Conjecture \ref{fr} is 
 correct up to the first non-trivial degree, is not dependent upon the characteristics, but on the 
 field being algebraically closed.
 
 We believe that Conjecture \ref{fr} is true for any field.
 However, some of the questions that we have posed in the previous sections have a negative answer.
 
Let us for instance consider Question \ref{q:com1}. Let $r=n+1$ and consider uniform degrees of the generators. It is well known that the Hilbert function of a CI with forms of degree 
$d$ is symmetric among a line trough $n(d-1)/2$. Now consider characteristic $p\geq d$. Then
 $f_{r+1}^{p-1} \cdot f_{r+1} = f_{r+1}^p = 0$ in $k[x_1,\ldots,x_n]/(x_1^d,\ldots,x_n^d)$, so
if $(d^{p-1} + d^{p})/2 \leq n(d-1)/2$, the series is not the one given in Conjecture \ref{fr}. For $p = d = 2$, this happens 
when $n \geq 6$.

\begin{question}
Does the Hochster-Laksov result hold when $k$ is finite?
\end{question}

\begin{question}
Is Conjecture \ref{fr} true when $n = 3$ and $k$ finite?
\end{question}

\begin{question}
Stanley used an explicit choice of forms when proving Conjecture \ref{fr} for $r = n+1$. As we have seen above, 
this choice does not work in positive characteristic. Can one give a construction valid also for positive characteristic? 
\end{question}


\medskip\noindent
{\bf Acknowledgements.}
We thank the participants of the \emph{Stockholm problem solving seminar} for many fruitful discussions related to the questions and conjectures presented in this paper.


\end{document}